\documentclass[12pt]{article}
\usepackage{amsmath,amsthm,amssymb,latexsym,color}
\usepackage{bbm}

\usepackage{fullpage}

\usepackage[colorlinks = true,linkcolor = blue,citecolor = blue,urlcolor = blue]{hyperref}
\usepackage{enumerate}

\let\oldproofname=\proofname
\renewcommand{\proofname}{\rm\bf{\oldproofname}}

\title{On partition functions of Gaussian random variables}
\author{{O. Friedland} \and {O. Gu\'edon} \and F. Souli
}

\newcommand\address{\noindent\leavevmode
	
	\medskip
	
	\noindent
	Omer Friedland
	\\
	Sorbonne Université,
	\\
	IMJ-PRG UMR7586, F-75005 Paris, France
	\\
	\texttt{\small%
		e-mail:  ofriedlan@imj-prg.fr}
	
	\medskip
	
	\noindent
	Olivier Gu\'{e}don \\
	Univ Gustave Eiffel, Univ Paris Est Creteil, CNRS \\
	LAMA UMR8050 F-77447 Marne-la-Vallée, France\\
	\texttt{\small%
		e-mail:  olivier.guedon@univ-eiffel.fr}
	
	\medskip

	\noindent
	Fabien Souli \\
	Univ Gustave Eiffel, Univ Paris Est Creteil, CNRS \\
	LAMA UMR8050 F-77447 Marne-la-Vallée, France\\
	\texttt{\small%
		e-mail: {souli.fabien@gmail.com}
	}
	
}

\newcommand{\E}{\mathbb E}
\newcommand{\R}{\mathbb R}
\newcommand{\Prob}{\mathbb P}
\newcommand{\one}{\mathbf 1}
\newcommand{\e}{\mathrm e}

\newcommand{\norm}[1]{\left\lVert #1\right\rVert}
\newcommand{\ip}[2]{\left\langle #1,#2\right\rangle}

\theoremstyle{plain}

\newtheorem{theorem}{Theorem}[section]
\newtheorem{lemma}[theorem]{Lemma}
\newtheorem{remark}[theorem]{Remark}
\newtheorem{proposition}[theorem]{Proposition}
\newtheorem{corollary}[theorem]{Corollary}

\begin{document}

\maketitle

\begin{abstract}
 For $\beta \in \R$, we define the partition function 
$$
Z_\beta(X) = \sum_{i=1}^N \exp(\beta X_i)
$$ 
of a centered Gaussian random vector  $X=(X_1, \ldots, X_N)$  with $\E[X_i^2]= 1$.
For $q \in \R$, we prove a complete phase
transition for the generalized $L_q$-means of $Z_\beta(X)$ at $q=1$. 
The centered Gaussian vector with covariance matrix $\Delta_N$ obtained from a regular simplex configuration of unit vectors 
maximizes these means for $q<1$, and  minimizes them for
$q>1$. 

The two regimes are governed by different principles. 
For $q > 1$ {and $\beta\ne0$}, the moment functional is globally strongly convex on the
entire set of correlation matrices, with an explicit modulus of convexity and a 
quantitative centroid-shape
stability estimate. For $q<1$, the strategy is different. We prove a universal comparison for log-concave
{profiles of reverse Brascamp--Lieb type.} Specializing {this result} to {the} Gumbel profile yields {a Laplace-transform} comparison between the partition functions. 
\end{abstract}

\bigskip

{\small
	\noindent{\bf MSC 2020 Classification:}
	primary:      39B62,  52A40, 60E15.
	\\
	\noindent
	{\bf Keywords:}  Gaussian processes, stochastic ordering, log-concave functions, reverse Brascamp-Lieb.
}

\section{Introduction}
\subsection{Presentation.}
Many problems in geometry involve describing the set that maximizes or minimizes certain Gaussian functionals under various 
constraints. These include{, for example,} the classical isoperimetry problem \cite{Borell75, SudakovTsirelson}.
In {convex geometry},  applications of the {Brascamp--Lieb} inequalities 
\cite{BrascampLieb} and reverse {Brascamp--Lieb} inequalities \cite{BartheReverse}  show that the regular simplex is the convex body that maximizes  the 
mean width among all convex bodies whose {maximal-volume inscribed ellipsoid} is the Euclidean unit ball \cite{Barthe1998} and {minimizes the mean width among all convex bodies whose minimal-volume enclosing ellipsoid is the Euclidean unit ball}
\cite{Schmuckenschlager1998}.

Other shape optimization problems are naturally connected to the study of Gaussian processes. For instance, maximizing the mean width of the convex hull of $N$ points 
constrained to lie on the unit sphere of the ambient space ${\R}^d$ is, in general, an open problem; see \cite[Section 9.10.2]{G-Klee94}. 
The case $N = 
d+1$ was called the {``Simplex Mean Width Conjecture''} and was solved recently by Mulgund \cite{Mulgund26}.  In this specific setting, his result asserts that the extreme configuration occurs when the $N$ points on the unit sphere form the 
vertices of a regular simplex. 
Detailed discussions of the  subject can be found in \cite{Litvak2018, Balakrishnan1963} and \cite[Chapter 10]{Weber1987}. It is also {explained in detail} in \cite{Mulgund26}, with further consequences in {signal theory}. 

In the study of comparison inequalities for Gaussian processes, generalizing Chevet's \cite{chevet1976}, Slepian's \cite{slepian1962}, Fernique's \cite{Fernique1974}, and Gordon's inequalities \cite{gordon1992}, it has proven natural to replace the max function by the
log-sum-exp function, which corresponds to the log of the partition function. 
There are other  possibilities {for replacing min-max functions or the sum of the $k$ largest coordinates by suitable approximations.}
{We refer, e.g.,} to \cite{chatterjee, peccati_turchi_2023, GS26} for further references.
Any centered Gaussian vector $X = (X_1, \ldots, X_N)$  such that $\E X_i^2 = 1$
has a covariance matrix $C$ that is the Gram matrix of a configuration of unit vectors,
also called a correlation matrix (and {conversely}). Therefore, 
to any correlation matrix $C$, we  associate the random variable $Z_\beta(X)$  given by
\begin{equation}
	\label{eq:defZbeta}
{Z}_\beta(X) := \sum_{i=1}^N e^{\beta X_i}
\end{equation}
where  $X \sim \mathcal{N}(0, C)$ and $\beta \in \R$.

The main result of the paper, Theorem \ref{thm:simplex-phase}, {exhibits a phase transition of} the generalized $L_q$-means of the partition function at $q = 1$.
It identifies the regular-simplex correlation matrix $\Delta_N$
as a minimizer
 when $q \in (1, + \infty)$ while it is a maximizer when $q \in (-\infty, 1)$.  
This phase transition is not a formal change of sign. 
The two regimes are governed by different principles. 

For $q>1$ {and $\beta\ne0$}, the relevant moment functional is globally strongly
convex on the full set of correlation matrices.
Its first variation at $\Delta_N$  measures
centroid defect, while its uniform second variation measures shape defect.
This yields both extremality and {a} quantitative stability result{;} see Theorem {\ref{thm:simplex-stability-above-one}}.
The argument is based on an evaluation of the second derivative of the functional along the classical interpolation path between two {centered Gaussian vectors}.

For $q<1$, we prove a universal comparison for
products of non-centered log-concave {profiles}. We simplify and generalize {Mulgund's approach} \cite{Mulgund26}.
The first step involves  {inf-convolution} and convexity  arguments combined with reverse {Brascamp--Lieb} inequalities due to Nakamura and Tsuji \cite{NT26}.
This leads to {the general statement in} Theorem \ref{thm:generalversionMNT}. 
The second part shows that the hypothesis $R - \frac{1}{N} J_N \succeq 0$ is used only at the very end of the argument, where it is incorporated through a simple linear-algebraic step, see Corollary \ref{cor:logconcave-product}.
Specializing the comparison to {the} Gumbel profile yields {a Laplace-transform} comparison between the partition functions, see \eqref{eq:LaplaceIneq}.

\subsection{Setup and main results}\label{sec:main-results}
Throughout, $N\ge2$, $\R^N$ is equipped with the standard scalar product 
$\langle \cdot,  \cdot \rangle$ and Euclidean norm defined by $\|y\|^2 = \langle y, y \rangle$. {The space of real symmetric $N\times N$ matrices} is equipped with the scalar product induced by the trace, the Frobenius norm $\| M \|_F^2 := \mathrm{tr}(M^T M) = \mathrm{tr}(M^2)$ and {the Loewner order}. {We write} $\one := (1,\ldots,1)^T$ {and} $J_N := \one\one^T$. Let
$$
\mathcal E_N := \{C\succeq0:C_{ii} = 1, 1\le i\le N\}
$$
be the set of correlation matrices.
The correlation matrix obtained from a regular simplex configuration of unit vectors is
\begin{align}
	\label{eq:simplex-covariance}
	\Delta_N := \frac{N}{N-1}I_N-\frac1{N-1}J_N,
\end{align}
where $I_N$ denotes the $N \times N$ identity matrix.
It has unit diagonal, off-diagonal entries $-1/(N-1)$, rank $N-1$, and
kernel $\operatorname{span}\{\one\}$. We denote by $S$ a centered Gaussian vector 
with covariance matrix $\Delta_N$. 

To any correlation matrix $C\in\mathcal E_N$, we associate the random variable
$Z_\beta(X)$ defined by \eqref{eq:defZbeta} where $X \sim \mathcal{N}(0, C)$.
For a positive random variable $Y$, set
\begin{align}\label{eq:q-mean-definition}
	\norm{Y}_q := 
	\begin{cases}
		(\E [Y^q])^{1/q},&q\ne0, \\
		\exp(\E [\log Y]),&q = 0.
	\end{cases}
\end{align}
We use this notation for generalized $L_q$-means; it is not a norm when
$q<1$. All quantities in \eqref{eq:q-mean-definition} are finite for the
Gaussian partition functions $Z_\beta(X)$ considered here.

Our first main theorem provides a phase transition of the functional
$C \mapsto \|Z_\beta(X)\|_q$, where $C \in \mathcal{E}_N$ and $X \sim \mathcal{N}(0, C)$, at $q = 1$. Since each correlation matrix is {the Gram matrix of a configuration of unit vectors}, we prove that for $q<1$, the $L_q${-mean} of the partition function is {maximized} when the configuration of the vertices 
is a regular simplex {whereas it is minimized} when $q >1$. {For $q=0$, $\log\|Z_\beta(X)\|_0=\E[\log Z_\beta(X)]$ is the expected log-partition function. For $\beta>0$, we use the normalized free-energy convention $\beta^{-1}\E[\log Z_\beta(X)]$, with temperature $1/\beta$. The quantity}
\[
{\frac{1}{\beta}} \E \left[ \log \left( \sum_{i = 1}^N e^{\beta X_i} \right) \right]
\]
{gives a finite-temperature Gaussian mean width for $\beta>0$, since}
\[
\lim_{\beta \to +\infty} \E \left[ \frac{1}{\beta}\log \left( \sum_{i = 1}^N e^{\beta X_i} \right) \right]
= 
\E [\max_{1 \le i \le N} X_i].
\]
\begin{theorem}[Simplex phase transition]\label{thm:simplex-phase}
Let $C\in\mathcal E_N$, $X \sim  \mathcal{N}(0,C)$ and $S \sim \mathcal{N}(0, \Delta_N)$. Let $\beta \in \R${. Then,} for every $q\in\R$,
\begin{align}\label{eq:q-mean-phase}
	\begin{cases}
		\norm{ Z_\beta(X)}_q
		\le \norm{ Z_\beta(S)}_q,&q<1, \\
		\norm{ Z_\beta(X)}_1 = N\e^{\beta^2/2},&q = 1, \\
		\norm{ Z_\beta(X)}_q
		\ge \norm{ Z_\beta(S)}_q,&q>1.
	\end{cases}
\end{align}
At $q = 0$, {taking logarithms of the generalized-mean inequality gives}
\begin{align}\label{eq:free-energy-main}
	\E\ [\log Z_\beta(X)]
	\le \E [\log Z_\beta(S)].
\end{align} 
\end{theorem}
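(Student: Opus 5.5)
The case $q=1$ is immediate: $\E[Z_\beta(X)]=\sum_i \E e^{\beta X_i}=N\e^{\beta^2/2}$, because each $X_i\sim\mathcal N(0,1)$ whatever $C$ is. For $q\neq1$ the two regimes need different arguments, as the introduction announces. Throughout I use two facts about $S$: it is exchangeable, and $\one^T\Delta_N\one=0$.

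\textbf{Case $q>1$.} Assume $\beta\ne0$; for $\beta=0$ both sides equal $N$. Put $F(x)=\big(\sum_i e^{\beta x_i}\big)^q$. Take $X$ and $S$ independent and set $X_t=\sqrt t\,X+\sqrt{1-t}\,S$, so that $X_t\sim\mathcal N(0,C_t)$ with $C_t=tC+(1-t)\Delta_N\in\mathcal E_N$. Let $\phi(t)=\E F(X_t)$. Gaussian integration by parts gives
$$\phi'(t)=\tfrac12\sum_{i,j}(C-\Delta_N)_{ij}\,\E[\partial_{ij}F(X_t)].$$
At $t=0$ the diagonal terms vanish, since $C$ and $\Delta_N$ both have unit diagonal. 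For $i\ne j$,
$$\partial_{ij}F=q(q-1)\beta^2 Z^{q-2}e^{\beta(x_i+x_j)},$$
and by exchangeability of $S$ its expectation is a constant $a>0$ independent of $i\neq j$. Hence
$$\phi'(0)=\tfrac12 q(q-1)\beta^2 a\sum_{i\ne j}(C-\Delta_N)_{ij}=\tfrac12 q(q-1)\beta^2 a\,\big(\one^TC\one-N+N\big)=\tfrac12 q(q-1)\beta^2 a\,\one^TC\one\ \ge0.$$
So the first variation at $\Delta_N$ is nonnegative; it measures the centroid defect $\|\sum_i u_i\|^2$ of the configuration of unit vectors $u_i$ with Gram matrix $C$.

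To conclude $\phi(1)\ge\phi(0)$, I will prove that $\phi$ is convex on $[0,1]$. Differentiating once more gives
$$\phi''(t)=\tfrac14\sum_{i,j,k,l}D_{ij}D_{kl}\,\E[\partial_{ijkl}F(X_t)],\qquad D=C-\Delta_N .$$
I would organise this fourth-order form using the tilted measures with weights $p_i=e^{\beta x_i}/Z$. Writing $D$ restricted to the diagonal-free part, the quantity should become a combination of
$$\E\big[Z^q\,\mathrm{Var}_p(\cdot)^2\big]\quad\text{and}\quad \E\big[Z^q\,\langle p,Dp\rangle^2\big]$$
with coefficients that are positive when $q>1$. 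I expect this to be the main obstacle. The cross terms $\langle p,Dp\rangle\sum_i p_iD_{ii}$ disappear because $D$ has zero diagonal, and I would try to complete the square in $D$ to exhibit nonnegativity. This is also the step that would give the explicit modulus of convexity.

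\textbf{Case $q<1$.} Here I reduce everything to a Laplace-transform comparison: for every $\lambda>0$,
$$\E\exp\big(-\lambda Z_\beta(X)\big)\ \ge\ \E\exp\big(-\lambda Z_\beta(S)\big).\qquad(\ast)$$
This suffices by the following representations, valid for $z>0$:
$$z^q=\tfrac{1}{\Gamma(-q)}\int_0^\infty\lambda^{-q-1}e^{-\lambda z}\,d\lambda\quad(q<0),$$
$$z^q=\tfrac{q}{\Gamma(1-q)}\int_0^\infty(1-e^{-\lambda z})\lambda^{-q-1}\,d\lambda\quad(0<q<1),$$
$$\log z=\int_0^\infty\frac{e^{-\lambda}-e^{-\lambda z}}{\lambda}\,d\lambda .$$
Applying these with Fubini, $(\ast)$ gives $\E Z_\beta(X)^q\ge \E Z_\beta(S)^q$ for $q<0$; raising to the power $1/q<0$ reverses the inequality, which is the claim. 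For $0<q<1$ it gives $\E Z_\beta(X)^q\le\E Z_\beta(S)^q$, and for $q=0$ it gives $\E\log Z_\beta(X)\le\E\log Z_\beta(S)$, which is \eqref{eq:free-energy-main}.

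To prove $(\ast)$, write its left side as $\E\prod_i f(X_i)$ with the Gumbel profile $f(x)=\exp(-\lambda e^{\beta x})$, which is log-concave. I would then prove a general comparison:
$$\E\prod_i f(X_i)\ge\E\prod_i f(S_i)\quad\text{for log-concave } f \text{ and all } C\in\mathcal E_N .$$
The plan is to rewrite both sides as Gaussian integrals of products of one-dimensional functions, $\int\prod_i f(\langle u_i,y\rangle)\,d\gamma(y)$. Then I apply the Nakamura--Tsuji reverse Brascamp--Lieb inequality, using inf-convolution and convexity to handle the non-centered log-concave factors. This should show that the simplex configuration, where the $u_i$ have zero centroid and $\sum_i u_iu_i^T$ is a multiple of the identity on $\one^\perp$, gives the smallest value. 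The condition relating $C$ to $\Delta_N$, namely that $\sum_i u_i$ need not vanish, should enter only at the end through $C-\frac1N J_N$-type linear algebra.
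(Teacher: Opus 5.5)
For $q=1$ and $q>1$ you follow the paper: the same path $C_t=(1-t)\Delta_N+tC$, the same first variation $\eta_{N,q,\beta}\,\one^TC\one\ge0$, and convexity of $t\mapsto\E[Z_\beta(X_t)^q]$. The convexity step you leave open does work along the lines you anticipate. Put $D=C-\Delta_N$, $p_i=e^{\beta x_i}/Z$, $a=\langle Dp,p\rangle$, $h=\sum_ip_i(Dp)_i^2$ and $k=\sum_{i,j}D_{ij}^2p_ip_j$. Then $\mathcal L_D^2(Z^q)=\frac{\beta^4q(q-1)}4Z^q\bigl(2(k-2h+a^2)+4(q-1)(h-a^2)+q(q-1)a^2\bigr)$. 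Here $k-2h+a^2$ is a doubly centred second moment and $h-a^2$ is a variance under the weights $p$, so every term is nonnegative. The paper reaches the same conclusion from $a^2\le h$, $a^2\le k$ and $k-2h+a^2\ge0$.

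The genuine gap is the case $q<1$. The comparison you plan to prove, $\E\prod_if(X_i)\ge\E\prod_if(S_i)$ for every log-concave $f$ and every $C\in\mathcal E_N$, is false. Take $f(x)=e^{-x^2/(2\sigma^2)}$ and $C=I_N$. Since $\Delta_N$ has eigenvalue $\frac N{N-1}$ with multiplicity $N-1$ and eigenvalue $0$, as $\sigma\to0$
\[
\E\prod_i f(X_i)=(1+\sigma^{-2})^{-N/2}\asymp\sigma^N,
\qquad
\E\prod_i f(S_i)=\Bigl(1+\frac{N}{(N-1)\sigma^2}\Bigr)^{-(N-1)/2}\asymp\sigma^{N-1}.
\]
The degenerate vector $S$ lives on $\one^\perp$ and so puts more mass near the origin. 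Nakamura--Tsuji could not give such a comparison anyway. Its lower bound is the product value $\prod_i\int f_i\,d\gamma$, attained by independent coordinates, not the value at the simplex.

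What you are missing is the lift. Take $\eta\sim\mathcal N(0,\frac1{N-1})$ independent of $X$ and $S$, and set $Y=\sqrt{\frac{N-1}N}(X+\eta\one)$ and $G=\sqrt{\frac{N-1}N}(S+\eta\one)$. Then $G\sim\mathcal N(0,I_N)$, which is the equality case. Moreover $R=\operatorname{Cov}(Y)$ satisfies $R-\frac1NJ_N=\frac{N-1}NC\succeq0$ automatically. This is where Lemma~\ref{lem:linearalgebra} enters; note that $C-\frac1NJ_N$ itself need not be positive semidefinite, e.g.\ for $C=\Delta_N$.

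Corollary~\ref{cor:logconcave-product} with the Gumbel profile then compares the Laplace transforms of $Z_{\tilde\beta}(Y)=e^{\beta\eta}Z_\beta(X)$ and $Z_{\tilde\beta}(G)=e^{\beta\eta}Z_\beta(S)$, where $\tilde\beta=\beta\sqrt{N/(N-1)}$. This does not by itself give your $(\ast)$. The factor $e^{\beta\eta}$ drops out only at the level of moments, via $\E[(e^{\beta\eta}Z)^q]=\E[e^{q\beta\eta}]\,\E[Z^q]$ and $\E\log(e^{\beta\eta}Z)=\E\log Z$. So your integral representations are the right final step, but they must be applied to the lifted variables.

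The paper never proves $(\ast)$. Taking $\lambda=e^{-\beta c}$ and letting $\beta\to\infty$, $(\ast)$ would give $\max_iX_i\le_{\mathrm{st}}\max_iS_i$. That is stronger than the convolved statement $\max_iX_i+\eta\le_{\mathrm{st}}\max_iS_i+\eta$, which is all the lift delivers.
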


In the case $q>1$, we use variational arguments to study the covariance functional $C\mapsto\E [Z_\beta(X)^q]$ where $X \sim \mathcal{N}(0, C)$. 
We prove a general statement about the second derivative of the functional along the {classical} 
interpolation path between two centered
Gaussian vectors{;} see Theorem \ref{thm:second-derivative}, from which we deduce the
next two statements.
\begin{theorem}[Global strong covariance convexity]
\label{thm:covariance-convexity}
Let $q>1$, $\beta \in \R^*$, and $C_0,C_1\in\mathcal E_N$. For any $t \in [0,1]$, put
$$
A = C_1-C_0,
\quad
C_t = (1-t)C_0+tC_1,
$$
and denote by $X_t$ a centered Gaussian random vector with covariance matrix $C_t$.
Then for $0\le t\le1$,
\begin{align}\label{eq:global-strong-convexity-chord}
	\E [Z_\beta(X_t)^q] \le
	(1-t)\E [Z_\beta(X_0)^q]
	+t\E [Z_\beta(X_1)^q] -\frac{\mu_{N,q,\beta}}2
	t(1-t)\norm{A}_{{F}}^2
\end{align}
where $\mu_{N,q,\beta} > 0$ is defined in \eqref{eq:mu-definition}.
In particular, $C\mapsto\E [Z_\beta(X)^q]$, where $X \sim \mathcal{N}(0, C)$, is strictly convex on the set of correlation matrices.
\end{theorem}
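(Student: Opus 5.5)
We will prove \eqref{eq:global-strong-convexity-chord} by showing that $\varphi(t) := \E[Z_\beta(X_t)^q]$ satisfies $\varphi''(t)\ge \mu_{N,q,\beta}\norm{A}_F^2$ on $[0,1]$. The chord inequality then follows from the standard fact that a function with $\varphi''\ge\mu\norm{A}_F^2$ lies below its chord by at least $\frac{\mu}{2}t(1-t)\norm{A}_F^2$. Strict convexity is immediate, since $A\neq0$ whenever $C_0\neq C_1$.

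For the second derivative we use the classical interpolation $X_t \overset{d}{=} \sqrt{1-t}\,X_0+\sqrt t\,X_1$ with $X_0,X_1$ independent, which has covariance $C_t$. Write $F(x)=(\sum_i e^{\beta x_i})^q$. Gaussian integration by parts (the heat equation in covariance) gives
\[
\frac{d}{dt}\E F(X_t)=\tfrac12\sum_{i,j}A_{ij}\E[\partial_{ij}F(X_t)].
\]
Differentiating once more gives
\[
\varphi''(t)=\tfrac14\sum_{i,j,k,l}A_{ij}A_{kl}\E[\partial_{ijkl}F(X_t)],
\]
which is the content of Theorem \ref{thm:second-derivative}. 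The key point is that $A$ has zero diagonal, because $C_0$ and $C_1$ both have unit diagonal. Hence only the mixed terms with $i\ne j$ and $k\ne l$ survive.

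We then compute $\partial_{ijkl}F$ for $F=Z^q$ with $Z=\sum e^{\beta x_m}$. Put $p_m=e^{\beta x_m}/Z$. For distinct indices,
\[
\partial_{ij}F = q(q-1)\beta^2 Z^q p_ip_j \qquad (i\ne j).
\]
Differentiating twice more produces a polynomial in the $p$'s times $\beta^4 Z^q$. We organize the quadratic form $\sum A_{ij}A_{kl}\partial_{ijkl}F$ into three groups:
\begin{itemize}
\item a ``square'' term $q(q-1)(q-2)(q-3)\beta^4Z^q(\sum_{i\ne j}A_{ij}p_ip_j)^2$ together with lower-order pieces;
\item terms $\sum_{i,j,l}A_{ij}A_{il}p_ip_jp_l$;
\item terms $\sum A_{ij}^2p_ip_j$.
\end{itemize}
The plan is to rewrite the whole expression as $\beta^4 Z^q\,\mathcal Q_p(A)$, where $\mathcal Q_p$ is a quadratic form in the off-diagonal entries of $A$ with coefficients polynomial in $p$ and $q$. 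We then show that $\mathcal Q_p(A)\ge c(q)\sum_{i\ne j}A_{ij}^2\,p_i^2p_j^2$ (or a similar weighted sum). One way to get this is to express $\mathcal Q_p(A)$ as a variance or sum of squares, e.g.\ $\mathrm{Var}$ of the random quantity $\sum A_{ij}\epsilon_i\epsilon_j$ under the multinomial law $p$, plus nonnegative multiples of such squares when $q>1$.

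Finally, we lower bound $\E[Z^q p_i^2p_j^2]$ uniformly in $C_t$ by a positive constant depending only on $N,q,\beta$. Since $q>1$ and $Z\ge e^{\beta x_i}$, we can restrict to an event such as $\{|X_{t,m}|\le 1\ \forall m\}$. On this event $p_i\ge e^{-2|\beta|}/N$, and its probability is bounded below uniformly over correlation matrices, e.g.\ by Šidák's inequality, $\ge \Prob(|g|\le1)^N$. This yields the constant $\mu_{N,q,\beta}>0$ from \eqref{eq:mu-definition}.

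The main obstacle is the algebraic positivity step. We must show that the fourth-order tensor contracted with an off-diagonal $A$ is nonnegative, and uniformly coercive, for every $q>1$, including $1<q<3$ where $q(q-1)(q-2)(q-3)$ changes sign. The first attempt will be to write $Z^{q}$ derivatives via the cumulant structure of the categorical distribution $p$, and to group terms into $(q-1)$ times a variance plus $(q-1)^2$ times a square of a mean-type quantity. If a clean sum-of-squares does not appear, the fallback is to write $Z^q=\Gamma(-q)^{-1}$-type integral representations only for the needed sign control, or to use Hölder to reduce to $q$ large.
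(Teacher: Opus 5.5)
Your outer structure is the paper's. The chord inequality follows from $\varphi''\ge\mu_{N,q,\beta}\norm{A}_F^2$ by convexity of $\varphi(t)-\frac{\mu_{N,q,\beta}}{2}\norm{A}_F^2t^2$. The second derivative is $\E[\mathcal L_A^2(Z^q)(X_t)]$ by Gaussian interpolation, and the zero diagonal of $A$ removes the terms with $i=j$. Note that Theorem \ref{thm:second-derivative} asserts the lower bound itself, not merely the formula; the paper's proof of the present statement is exactly your first paragraph applied to it. Since you instead set out to prove that bound, the argument must stand on its own, and its decisive step is missing. Your three groups are the right ones: with $p_m=e^{\beta x_m}/Z$, $d=Ap$, $a=\sum_{i,j}A_{ij}p_ip_j$, $h=\sum_ip_id_i^2$ and $k=\sum_{i,j}A_{ij}^2p_ip_j$,
\[
\varphi''(t)=\frac{\beta^4q(q-1)}{4}\,\E\Bigl[Z^q\bigl((q-2)(q-3)a^2+4(q-2)h+2k\bigr)\Bigr].
\]
However, the coefficient of $h$ is negative for $1<q<2$ and that of $a^2$ is negative for $2<q<3$, and you give no argument controlling the sign. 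Your fallbacks do not supply one. H\"older offers no way to transfer convexity in $C$ between exponents. An integral representation of $Z^q$ through $e^{-sZ}$ cannot be used termwise, because $C\mapsto\E[e^{-sZ}]$ is not convex: for $N=2$, $s=1$, its second variation at $C_{12}=-1$ is $\beta^4A_{12}^2\,\E[e^{-Z}z_1z_2(1-z_1)(1-z_2)]<0$, where $z_m=e^{\beta X_m}$ and $z_1z_2=1$.

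The missing idea is to read $a,h,k$ as moments of two i.i.d.\ indices $I,J$ with law $p$: $a=\E[A_{IJ}]=\E[d_I]$, $h=\E[d_I^2]$, $k=\E[A_{IJ}^2]$. Cauchy--Schwarz gives $a^2\le h$ and $a^2\le k$, and the doubly centred variable gives $k-2h+a^2=\E[(A_{IJ}-d_I-d_J+a)^2]\ge0$. Put $\rho=q-2$. For $1<q<2$, using $h\le(k+a^2)/2$ and then $a^2\le k$, the bracket is at least $(\rho+1)(\rho a^2+2k)\ge q(q-1)k$. For $q\ge2$, using $a^2\le h$ when $2\le q<3$, it is at least $2k$.

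Your final step also would not yield the theorem as stated, which fixes $\mu_{N,q,\beta}$ by \eqref{eq:mu-definition}. Your claim that it produces that constant is incorrect. The \v{S}id\'ak-event bound is valid and uniform in $C_t$, but its constant carries a factor $\Prob(|g|\le1)^N$ and a factor exponentially small in $|\beta|$. By contrast, for $q\ge2$ the stated constant $2^{q-3}\beta^4q(q-1)$ is independent of $N$.

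To obtain \eqref{eq:mu-definition}, keep the coercivity with weight $p_ip_j$ (i.e.\ in terms of $k$, not $\sum A_{ij}^2p_i^2p_j^2$) and bound $\E[Z^qp_ip_j]=\E[z_iz_jZ^{q-2}]$ directly. For $q\ge2$, use $Z\ge2\sqrt{z_iz_j}$ and $\E[(z_iz_j)^{q/2}]\ge1$, which gives $2^{q-2}$. For $1<q<2$, use Cauchy--Schwarz, $\E[Z^{2-q}]\,\E[z_iz_jZ^{q-2}]\ge(\E\sqrt{z_iz_j})^2\ge1$, together with $\E[Z^{2-q}]\le\sum_i\E[z_i^{2-q}]\le Ne^{\beta^2/2}$.
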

The next {theorem} is a precise quantitative stability result.
\begin{theorem}[Simplex stability above one]\label{thm:simplex-stability-above-one}
Let $q>1$, $\beta \in \R^*$, and let $C\in\mathcal E_N$. Let $X \sim \mathcal{N}(0, C)$ and 
$S \sim \mathcal{N}(0, \Delta_N)${. Then}
\begin{align}\label{eq:centroid-shape-stability}
	\E [Z_\beta(X)^q]-\E [Z_\beta(S)^q]
	\ge
	\eta_{N,q,\beta}\one^TC\one
	+\frac{\mu_{N,q,\beta}}2\norm{C-\Delta_N}_{{F}}^2{,}
\end{align}
where $\mu_{N,q,\beta} > 0$  and $\eta_{N,q,\beta} >0$ are defined in \eqref{eq:mu-definition}
and \eqref{eq:eta-definition}.
\end{theorem}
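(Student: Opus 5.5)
Put $F(C):=\E[Z_\beta(X)^q]$ with $X\sim\mathcal N(0,C)$. I will combine the global strong convexity of Theorem~\ref{thm:covariance-convexity} with a first-order computation at $\Delta_N$. Take $C_0=\Delta_N$ and $C_1=C$, so $A=C-\Delta_N$. Rearranging \eqref{eq:global-strong-convexity-chord}, dividing by $t$ and letting $t\downarrow 0$ gives
\[
F(C)-F(\Delta_N)\ \ge\ \frac{d}{dt}\Big|_{t=0^+}F(C_t)+\frac{\mu_{N,q,\beta}}2\norm{C-\Delta_N}_F^2 .
\]
It therefore suffices to show that the directional derivative at $\Delta_N$ in the direction $A$ is at least $\eta_{N,q,\beta}\,\one^TC\one$.

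For the first variation I use the Gaussian interpolation (heat-equation) formula, which is the first-order version of the computation behind Theorem~\ref{thm:second-derivative}. With $f(x)=(\sum_i e^{\beta x_i})^q$ it reads
\[
\frac{d}{dt}F(C_t)=\frac12\sum_{i,j}A_{ij}\,\E[\partial_{ij}f(X_t)] .
\]
At $\Delta_N$ the law of $S$ is invariant under coordinate permutations, so the matrix $H:=\E[\nabla^2 f(S)]$ has the form $aI_N+bJ_N$. Since $A$ has zero diagonal, only the off-diagonal part contributes:
\[
\langle H,A\rangle=b\sum_{i\ne j}A_{ij}=b\big(\one^TC\one-\one^T\Delta_N\one\big)=b\,\one^TC\one ,
\]
because $\Delta_N\one=0$. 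The derivative is therefore $\tfrac b2\,\one^TC\one$, and I set $\eta_{N,q,\beta}:=b/2$. This should match \eqref{eq:eta-definition}.

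It remains to check that $b>0$. For $i\ne j$,
\[
\partial_{ij}f=q(q-1)\beta^2 Z^{q-2}e^{\beta x_i}e^{\beta x_j},
\]
so
\[
b=q(q-1)\beta^2\,\E\big[Z_\beta(S)^{q-2}e^{\beta S_1+\beta S_2}\big],
\]
which is strictly positive when $q>1$ and $\beta\neq0$. The expectation is finite because $S$ is Gaussian.

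Justifying the limit $t\downarrow0$ and the differentiation under the expectation is routine: the Gaussian moment bounds for $Z_\beta$ give integrable domination uniformly in $t\in[0,1]$. The main point needing care is the identification of the off-diagonal sum, which relies on the zero diagonal of $A$ and on $\one\in\ker\Delta_N$.
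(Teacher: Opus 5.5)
Your proof is correct and follows essentially the paper's route. The paper also takes $C_0=\Delta_N$, $C_1=C$ and computes $F'(0)=\eta_{N,q,\beta}\sum_{i\ne j}A_{ij}=\eta_{N,q,\beta}\one^TC\one$ using permutation invariance, $A_{ii}=0$ and $\Delta_N\one=0$. The only difference is that it combines this with the second-derivative bound of Theorem~\ref{thm:second-derivative} through $F(1)-F(0)=F'(0)+\int_0^1(1-t)F''(t)\,dt$, whereas you extract the same inequality from the chord form \eqref{eq:global-strong-convexity-chord} by letting $t\downarrow0$; the one-sided differentiability at the singular endpoint $\Delta_N$, which you call routine, is supplied by Proposition~\ref{prop:derivative}.
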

	If $C$ is the Gram matrix of unit vectors $v_1,\ldots, v_N$, then
	\begin{align}\label{eq:centroid-gram}
		\one^TC\one =  \langle C {\one}, {\one} \rangle = \norm{\sum_{i = 1}^N v_i}^2,
	\end{align}
	so the first term in \eqref{eq:centroid-shape-stability} measures
	non-centering while the second {term} measures shape defect.

For $q < 1$, the approach is {different}. We start by proving an extension of a
reverse {Brascamp--Lieb-type} inequality for non-centered log-concave {profiles}. 
\begin{theorem}
\label{thm:generalversionMNT}
Let $w_1, \ldots, w_N : \R \to (0, +\infty)$ be log-concave functions.
For $1\le i\le N$, define
\begin{align} \label{eq:phi}
	\varphi_i(u) := \log\int_{\R}w_i(u-x)d\gamma(x),
\end{align}
and
\begin{align} \label{eq:psi}
	\psi_i(s) :=
	(t^2/2) \square \varphi_i =  \inf_{t\in\R} \left\{ \frac{t^2}{2}+\varphi_i(s-t) \right\},
\end{align}
where
$$
d\gamma(x) = \frac1{\sqrt{2\pi}}\e^{-x^2/2}dx.
$$
Let $R$ be an $N\times N$ correlation matrix, let $X\sim {\mathcal N}(0,R)$, and put
$$
T = R^{1/2}.
$$
Then
\begin{align} \label{eq:variational}
	\log \E \left[ \prod_{i = 1}^Nw_i(X_i) \right] \ge \sup_{y\in\R^N} \left\{ \sum_{i = 1 }^N\psi_i((Ty)_i)-\frac12\norm{y}^2 \right\}. 
\end{align}
\end{theorem}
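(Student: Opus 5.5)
The plan has three stages: reduce to a fixed shift $y$ by a Gaussian change of measure, factorize the resulting tilt, and then apply a reverse Brascamp--Lieb lower bound. I first check the scalar case $N=1$, $R=1$, which shows what the inequality encodes. For any $y$, taking $t=y$ in the infimum defining $\psi$ gives
\[
\psi(y)-\tfrac{y^2}{2}\le \tfrac{y^2}{2}+\varphi(0)-\tfrac{y^2}{2}=\varphi(0)=\log\E[w(X)],
\]
so \eqref{eq:variational} is immediate there. In general \eqref{eq:variational} therefore says: for every $y$ there exist shifts $t_i$ with
\[
\log\E\Big[\prod_i w_i(X_i)\Big]\ \ge\ \sum_i\Big[\varphi_i\big((Ty)_i-t_i\big)+\tfrac{t_i^2}{2}\Big]-\tfrac12\norm{y}^2 .
\]
This is a reverse Brascamp--Lieb statement: the $N$-fold expectation is bounded below by a product of one-dimensional Gaussian-regularized quantities $e^{\varphi_i}$, each evaluated at an optimally chosen point.

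\textbf{Step 1 (reduction to a fixed shift).} Write $X=TG$ with $G\sim\mathcal N(0,I_N)$. By a density argument in $R$, I first assume $R\succ0$, so $T$ is invertible; the general case follows by approximating $R$ by $(1-\varepsilon)R+\varepsilon I_N$. Fix $y\in\R^N$ and apply the Cameron--Martin formula to the shift $G\mapsto G+y$:
\[
\E[F(G)]=e^{-\norm{y}^2/2}\,\E\big[F(G+y)\,e^{-\ip{y}{G}}\big].
\]
Setting $z:=T^{-1}y$, the tilt factorizes along the coordinates, since $\ip{y}{G}=\ip{Tz}{G}=\ip{z}{TG}=\sum_i z_iX_i$. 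Hence
\[
\E\Big[\prod_i w_i(X_i)\Big]=e^{-\norm{y}^2/2}\,\E\Big[\prod_i h_i(X_i)\Big],\qquad h_i(x):=w_i\big((Ty)_i+x\big)e^{-z_ix}.
\]
Each $h_i$ is again log-concave, and the term $-\frac12\norm{y}^2$ in \eqref{eq:variational} has already appeared. It remains to show
\[
\log\E\Big[\prod_i h_i(X_i)\Big]\ \ge\ \sum_i\psi_i\big((Ty)_i\big).
\]

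\textbf{Step 2 (reverse Brascamp--Lieb).} For this inequality I will invoke the Nakamura--Tsuji reverse Brascamp--Lieb inequality \cite{NT26} for log-concave inputs. The data consist of the unit vectors $t_i$ (the rows of $T$, unit because $R_{ii}=1$) and the Gaussian reference measure. My understanding of their result is that the infimum of $\E[\prod_i h_i(\ip{t_i}{G})]$ over all log-concave $h_i$ with prescribed one-dimensional Gaussian-regularized masses is attained on Gaussian, non-centered profiles. Consequently the right-hand side is a product of one-dimensional terms of the form $\exp\big(\inf_{t}\{\tfrac{t^2}{2}+\log\!\int h_i(\cdot)\,d\gamma\text{ shifted by }t\}\big)$.

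\textbf{Step 3 (identification with $\psi_i$).} The shifts $(Ty)_i$ are already built into $h_i$, and each linear tilt $e^{-z_ix}$ contributes a quadratic term after Gaussian completion of the square. With these two facts, the one-dimensional term should reduce exactly to $\inf_t\{\tfrac{t^2}{2}+\varphi_i((Ty)_i-t)\}=\psi_i((Ty)_i)$.

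\textbf{Main obstacle.} I expect the crux to be Step 3: matching the Gaussian extremizer constant from \cite{NT26}, together with the cross terms created by the tilt $e^{-z_ix}$ and by the correlations $R$, to precisely the inf-convolution $\psi_i$ without losing a constant. My first attempt would be to carry out that optimization explicitly with Gaussian test profiles. I would use the concavity of $\varphi_i$, which holds because a log-concave function convolved with $\gamma$ is log-concave, together with the convexity of $t^2/2$, to show that the infimum is attained and that the quadratic terms cancel against $\norm{y}^2/2$ via $\norm{y}^2=\ip{Rz}{z}$.
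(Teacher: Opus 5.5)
Your Step 1 is correct. Step 3 is also not where the difficulty lies: with $s=Ty$, the substitution $x\mapsto -x$ gives $\int h_i\,d\gamma=\int w_i(s_i-x)e^{z_ix}\,d\gamma(x)=\exp\bigl(\frac{z_i^2}{2}+\varphi_i(s_i-z_i)\bigr)\ge e^{\psi_i(s_i)}$, so there are no cross terms to match.

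The genuine gap is Step 2. The Nakamura--Tsuji inequality (Lemma \ref{lem:centered-product}) gives $\E[\prod_i f_i(X_i)]\ge\prod_i\int f_i\,d\gamma$ only for log-concave $f_i$ with Gaussian barycenter $\int x f_i(x)\,d\gamma(x)=0$. Without centering it fails: for $N=2$, correlation $\rho<0$ and $f_1=f_2=e^{ax}$, the left side is $e^{a^2(1+\rho)}$ while the right side is $e^{a^2}$. Letting $a\to\infty$ shows that no lower bound in terms of the masses alone can hold for non-centered profiles, so the ``extremal principle'' you describe is false.

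The bound you actually use is $\E[\prod_i h_i(X_i)]\ge\prod_i\inf_{t\in\R}\int h_i(x)e^{tx}\,d\gamma(x)$ for arbitrary log-concave $h_i$. This is not the cited result. It is exactly the instance $y=0$ of \eqref{eq:variational} applied to the $h_i$, and conversely your Step 1 turns it into the full theorem. So Step 2 assumes what is to be proved. For a generic $y$, your $h_i(x)=w_i((Ty)_i+x)e^{-z_ix}$ is simply not centered, and nothing in the plan addresses this.

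The missing idea is that the bound is needed at only one $y$: the maximizer $y^*$ of $\mathcal J(y)=\sum_i\psi_i((Ty)_i)-\frac12\norm{y}^2$. It exists and is unique because $\mathcal J$ is strictly concave and tends to $-\infty$.

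1. Its Euler--Lagrange equation is $y^*=T\,\psi'(Ty^*)$.
2. By Proposition \ref{prop:wellknown}, $\psi_i'(s)=t_i^*(s)$, the minimizer in the inf-convolution, characterized by $\int y\,w_i(s-y)e^{t_i^*(s)y}\,d\gamma(y)=0$.
3. Put $s=Ty^*$ and $z:=t^*(s)$. Then $Tz=y^*$, so $T$ never has to be inverted and your approximation by $R\succ0$ is unnecessary.
4. Each $h_i$ is then exactly centered, and $\int h_i\,d\gamma=e^{\psi_i(s_i)}$.
5. Lemma \ref{lem:centered-product} combined with your Step 1 yields $\log\E[\prod_i w_i(X_i)]\ge\mathcal J(y^*)=\sup_y\mathcal J(y)$.

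This fixed-point choice of the shift, which makes all $N$ tilted profiles centered simultaneously, is precisely the paper's argument (Proposition \ref{prop:step2} and the optimization in the proof of the theorem).
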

{It is worth noting that equality holds when each $w_i$ has a log-affine profile}
$w_i(z) = e^{\alpha_i z}$, $\alpha_i \in \R$.
The proof is based on reverse {Brascamp--Lieb} inequalities for centered log-concave functions due to Nakamura and Tsuji \cite{NT26}{;} see also \cite{MNT26}. 
{Whereas} Mulgund \cite{Mulgund26} {considered only} functions of the form 
$w = {\mathbbm{1}}_{(-\infty,c]}$, we 
{observe that inf-convolution and standard arguments from convex analysis}
lead to this {more} general statement.
{This formulation} simplifies and generalizes his approach.

An important corollary of Theorem \ref{thm:generalversionMNT} is the following family of inequalities,
 valid under the assumption that $R-\frac1N J_N \succeq 0${.} 
 {Its proof uses the simple linear-algebra argument of Lemma \ref{lem:linearalgebra}.}
\begin{corollary}
	\label{cor:logconcave-product}
Let $R$ be a correlation matrix such that
\[
R-\frac1N J_N \succeq 0{,}
\]
and {let} $X \sim \mathcal{N} (0, R)$ {be} a centered Gaussian {vector with} covariance matrix $R$.
Then for all log-concave functions $w_1, \ldots, w_N : \R \to (0, + \infty)$,
\begin{equation}
	\label{eq:stocorder2}
	\E \left[ \prod_{i=1}^{N} w_i(X_i) \right]
	\ge
	\exp  \left(  \sup_{t \in \R} \left\{ - \frac{N t^2}{2} + 
	\sum_{i=1}^{N} \psi_i (t) \right\} \right)
\end{equation}
where the $\psi_i$'s are defined in \eqref{eq:psi}.
{If $w_1=\cdots=w_N=w$, where $w : \R \to [0, + \infty)$}, we get
\begin{equation}
	\label{eq:stocordermain1}
	\E \left[ \prod_{i=1}^{N} w(X_i) \right]
	\ge
	\E \left[ \prod_{i=1}^{N} w(G_i) \right] =  ( \E \left[ w(G_1) \right])^N, 
\end{equation}
where $G=(G_1, \ldots, G_N) \sim \mathcal{N}(0, I_N)$ is a standard Gaussian random vector.  In particular, for any $\lambda > 0$
and $\beta \in \R$,
\begin{equation}
	\label{eq:LaplaceIneq}
	\E \left[ \exp\left( -\lambda	\sum_{i=1}^N e^{\beta X_i} \right) \right]
	\ge
	\E  \left[	\exp\left( -\lambda	\sum_{i=1}^N e^{\beta G_i} \right) \right]
\end{equation}
\end{corollary}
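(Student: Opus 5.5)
The plan is to apply Theorem \ref{thm:generalversionMNT} to $R$ and bound the supremum in \eqref{eq:variational} from below by restricting $y$ to a one-parameter family. Write $T=R^{1/2}$. The aim is to find $y$ with $(Ty)_i = t$ for every $i$, i.e. $Ty = t\one$, and with $\norm{y}^2\le N t^2$. Then \eqref{eq:variational} gives
\[
\log\E\Big[\prod_i w_i(X_i)\Big] \ge \sum_i\psi_i(t)-\frac{N t^2}{2}
\]
for every $t$, and \eqref{eq:stocorder2} follows by taking the supremum over $t$.

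\textbf{Linear-algebra step (Lemma \ref{lem:linearalgebra}).} The hypothesis $R\succeq \frac1N J_N$ says that $\langle Rz,z\rangle\ge \frac1N\langle z,\one\rangle^2$ for all $z$. This implies that $\one$ lies in the range of $R$, hence of $T$. Write $\one = Ru$. Plugging $z=u$ into the inequality gives $\langle u,\one\rangle\ge \frac1N\langle u,\one\rangle^2$, so $\langle u,\one\rangle\le N$.

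Now set $y=t\,Tu$. Then $Ty=t\,Ru=t\one$. Moreover $\norm{y}^2=t^2\langle Ru,u\rangle=t^2\langle \one,u\rangle\le Nt^2$.

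The main subtlety is the case where $R$ is singular. There the range argument needs care: by the Schur-complement/Cauchy--Schwarz form of $R\succeq \frac1N\one\one^T$, the condition is equivalent to $\one\in\mathrm{range}(R)$ together with $\one^TR^{+}\one\le N$. I would take $u=R^{+}\one$.

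\textbf{Equal profiles: reduction to \eqref{eq:stocordermain1}.} Apply the same inequality to $R=I_N$. Here Theorem \ref{thm:generalversionMNT} is actually an equality when $R=I_N$. This is not needed, though: for independent $G_i$ we can compute directly. Since $\E[w(G_1)]=e^{\varphi(0)}$, it suffices to show
\[
\sup_t\{\psi(t)-t^2/2\}=\varphi(0).
\]
This holds because $\psi(t)-t^2/2=\inf_s\{s^2/2-ts+\varphi(t-s)\}\le\varphi(0)$ (take $s=t$), while $t=0$ gives $\psi(0)=\inf_s\{s^2/2+\varphi(-s)\}$. To conclude $\psi(0)\ge\varphi(0)$ one would then need $\varphi(-s)\ge\varphi(0)-s^2/2$.

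I expect this reverse inequality to be the main obstacle. One natural route is to use semiconvexity of $\varphi$: by the Brascamp--Lieb/Prékopa variance bound, $\varphi''\ge -1$ since $\varphi$ is the log of a Gaussian convolution of a log-concave function. However, that alone does not give the bound at $t=0$. So I would instead optimize the choice of $t$: take $t$ with $\varphi'(t)$ appropriately related to the tangent line, using the identity
\[
\sup_t\{\psi(t)-t^2/2\}=\sup_t\inf_s\{\cdots\}
\]
together with a minimax/Legendre-duality argument. Since $\varphi+\frac{(\cdot)^2}{2}$ is convex, the function $s\mapsto s^2/2-ts+\varphi(t-s)$ is convex in $s$, which makes the minimax exchange legitimate and yields exactly $\varphi(0)$.

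\textbf{Laplace inequality \eqref{eq:LaplaceIneq}.} Apply \eqref{eq:stocordermain1} with $w(x)=\exp(-\lambda e^{\beta x})$. This $w$ is log-concave because $x\mapsto\lambda e^{\beta x}$ is convex, and then $\prod_i w(X_i)=\exp(-\lambda\sum_i e^{\beta X_i})$ gives the claim.
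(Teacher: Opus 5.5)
Your route to \eqref{eq:stocorder2} and \eqref{eq:LaplaceIneq} is the paper's: restrict $y$ in \eqref{eq:variational} to a line with $Ty=t\one$ and $\norm{y}^2\le Nt^2$. Your argument is a correct and more direct proof of Lemma \ref{lem:linearalgebra}: $\ker R\subset\one^\perp$, so $\one=Ru$, then $y=tTu$ with $\norm{y}^2=t^2\ip{u}{\one}\le Nt^2$. It already covers singular $R$, so $R^{+}$ is not needed.

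There is, however, a genuine gap. \eqref{eq:stocordermain1} is asserted for log-concave $w:\R\to[0,+\infty)$, but everything you invoke requires $w>0$ on all of $\R$. This applies to Theorem \ref{thm:generalversionMNT}, which rests on Proposition \ref{prop:wellknown}: positivity is what makes $\Theta_s$ coercive and produces $t^*(s)$. Take $w=\mathbbm{1}_{(-\infty,c]}$, the case singled out in the remark after the corollary. Then $\psi=-\infty$ on $[c,\infty)$ and $t^*(s)$ does not exist there, so the theorem cannot be applied directly. You need the approximation step the paper supplies:
\begin{enumerate}[(i)]
\item If $w=0$ a.e., both sides vanish.
\item Otherwise, $w_\varepsilon=w\star\gamma_\varepsilon$ is log-concave and strictly positive.
\item The bound $w\le e^{ax+b}$ gives $w_\varepsilon\le e^{ax+b+a^2/2}$ uniformly for $\varepsilon\le1$.
\item $w_\varepsilon\to w$ Lebesgue-a.e., hence almost surely at each $X_i$ and at $G_1$, since all marginals are $\mathcal N(0,1)$ even when $R$ is singular.
\item Dominated convergence then passes the inequality to the limit.
\end{enumerate}

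Second, the duality step you flag as the main obstacle is not carried out correctly. Substituting $u=t-s$ in \eqref{eq:psi} gives $\psi(t)-t^2/2=\inf_u\{H(u)-tu\}$ with $H(u)=u^2/2+\varphi(u)$. Your displayed expression $\inf_s\{s^2/2-ts+\varphi(t-s)\}$ is a different function. For $w(z)=e^{\alpha z}$ it equals $-t^2/2$, whose supremum is $0\neq\alpha^2/2=\varphi(0)$. In addition, the minimax exchange over the non-compact $\R\times\R$ is asserted, not justified.

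No minimax is needed, since only $\sup_t\{\psi(t)-t^2/2\}\ge\varphi(0)$ is used. Take $t=\varphi'(0)=H'(0)$; convexity of $H$ gives $H(u)-\varphi'(0)u\ge H(0)=\varphi(0)$ for all $u$. This is Proposition \ref{prop:wellknown}(i) at $u=0$, which is exactly what the paper cites.
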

\begin{remark}
For an arbitrary real number $c$, set $w = \mathbbm{1}_{(-\infty, c]}${. Then} $w$ is log-concave 
and \eqref{eq:stocordermain1} is a generalization of Theorem 2.1 of Mulgund \cite{Mulgund26}.
\end{remark}

\subsection{Organization of the paper.}
In Section \ref{sec:globalstrongconvexity}, we present the global strong convexity properties of the power of the partition function for $q > 1$ {and $\beta\ne0$}. 
The proof of Theorem \ref{thm:covariance-convexity}
{uses} the variational approach{.}
In Section \ref{sec:stochordering}, we present the proofs of Theorem \ref{thm:generalversionMNT} and  Corollary \ref{cor:logconcave-product}. Finally, in Section \ref{sec:final}, we prove Theorem \ref{thm:simplex-phase}.

\section{Global strong covariance convexity.}
\label{sec:globalstrongconvexity}
In this section, for a fixed $q > 1$, we study the function $C \mapsto \E \left[ Z_\beta (X)^q \right]$, where $C \in \mathcal{E}_N$ and $X \sim \mathcal{N}(0, C)$, through a variational
approach and prove Theorem \ref{thm:covariance-convexity} and Theorem \ref{thm:simplex-stability-above-one}. The core of the argument is an evaluation of the second derivative of the functional along the classical interpolation path between {two centered Gaussian vectors}.
For $q > 1$ and $\beta \in {\R}$, set
\begin{align}\label{eq:mu-definition}
	\mu_{N,q,\beta} :=\begin{cases}
	\displaystyle	\frac{\beta^4q^2(q-1)^2}{4N}
		\exp \left(-\beta^2/2\right),
		&1<q<2, \\
		2^{q-3}\beta^4q(q-1),
		&q\ge2
	\end{cases}
\end{align}
and
\begin{align}\label{eq:eta-definition}
	\eta_{N,q,\beta} := \frac{\beta^2q(q-1)}2
	\E \left[
	 Z_\beta(S)^{q-2}
	\e^{\beta(S_1+ S_2)}
	\right]
\end{align}
where $S \sim \mathcal{N}(0, \Delta_N)$.
{In particular, $\mu_{N,q,0}=\eta_{N,q,0}=0$.}
\begin{theorem}
	\label{thm:second-derivative}
	Let $q>1$, $\beta \in \R$, and let $C_0,C_1\in\mathcal E_N$. For $t \in [0,1]$, put
	$$
	A = C_1-C_0,
	\quad
	C_t = (1-t)C_0+tC_1
	$$
	and denote by $X_t$ a centered Gaussian random vector with covariance matrix $C_t$.
	Then $t\mapsto\E [Z_\beta(X_t)^q]$ is twice differentiable on $[0,1]$ and
	\begin{align}\label{eq:global-strong-convexity-second}
		\frac{d^2}{dt^2}\E [Z_\beta(X_t)^q]
		\ge
		\mu_{N,q,\beta}\norm{A}_{{F}}^2. 
	\end{align}
\end{theorem}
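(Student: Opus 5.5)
The plan is to differentiate twice along the interpolation path using Gaussian integration by parts, and then show that the resulting fourth-order expression is pointwise nonnegative, with a quantitative lower bound. Put $F(x)=Z_\beta(x)^q$, $e_i:=e^{\beta x_i}$, $Z:=\sum_i e_i$, and let $D_A:=\sum_{i,j}A_{ij}\partial_i\partial_j$. The classical interpolation formula gives
\[
\frac{d}{dt}\E[F(X_t)]=\tfrac12\E[D_AF(X_t)],\qquad \frac{d^2}{dt^2}\E[F(X_t)]=\tfrac14\E[D_A^2F(X_t)].
\]
Differentiation under the expectation is justified because $F$ and all its derivatives up to order four are dominated by $e^{c\sum_i|x_i|}$, which is integrable uniformly in $t\in[0,1]$.

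The key structural fact is that $A_{ii}=0$, since both $C_0$ and $C_1$ have unit diagonal. Using $\partial_iZ=\beta e_i$ and $\partial_{ij}Z=\beta^2e_i\delta_{ij}$, the term $qZ^{q-1}\beta^2\sum_iA_{ii}e_i$ drops out, and
\[
D_AF=\beta^2q(q-1)Z^{q-2}Q,\qquad Q:=\sum_{i,j}A_{ij}e_ie_j .
\]
Apply $D_A$ once more and use the product rule with $r:=q-2$. The three ingredients are:
\begin{itemize}
\item $D_AZ^r=r(r-1)\beta^2Z^{r-2}Q$;
\item $D_AQ=2\beta^2P$ with $P:=\sum_{i,j}A_{ij}^2e_ie_j$, again using $A_{ii}=0$;
\item the cross term $2\sum_{k,l}A_{kl}\,\partial_kZ^r\,\partial_lQ=4r\beta^2Z^{r-1}W$ with $W:=\sum_le_l(Ae)_l^2$.
\end{itemize}
This yields the pointwise identity
\[
D_A^2F=\beta^4q(q-1)Z^{q-4}\bigl(r(r-1)Q^2+4rZW+2Z^2P\bigr).
\]

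Two Cauchy--Schwarz inequalities will be used repeatedly: $Q^2\le ZW$, and, from $(Ae)_l^2\le Z\sum_jA_{lj}^2e_j$, also $ZW\le Z^2P$.

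For $q\ge2$ we have $r\ge0$, so the first inequality gives $r(r-1)Q^2+4rZW\ge(r^2+3r)Q^2\ge0$. It remains to bound $\E[2Z^{q-2}P]$ from below. Since $q-2\ge0$ and $Z\ge e_i+e_j$, we get $Z^{q-2}e_ie_j\ge(e_i+e_j)^{q-2}e_ie_j\ge 2^{q-2}(e_ie_j)^{q/2}$ by AM--GM. Then $\E[(e_ie_j)^{q/2}]=\exp\bigl(\beta^2q^2(1+C_{ij})/4\bigr)\ge1$, because $C_{ij}\ge-1$. This gives
\[
\frac{d^2}{dt^2}\E[F(X_t)]\ge\tfrac14\beta^4q(q-1)\,2^{q-1}\norm{A}_F^2=2^{q-3}\beta^4q(q-1)\norm{A}_F^2,
\]
which matches $\mu_{N,q,\beta}$ in \eqref{eq:mu-definition}.

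The main obstacle is $1<q<2$, where $r=-s$ with $s\in(0,1)$ and the cross term $-4sZW$ is negative. The naive chain $ZW\le Z^2P$ only yields the coefficient $2-4s$, which fails for $s>1/2$. The first thing I would try is to normalise $p:=e/Z$ and prove a sharper inequality for the probability vector $p$ and symmetric $A$ with zero diagonal:
\[
2\sum_{i,j}A_{ij}^2p_ip_j-4s\sum_lp_l(Ap)_l^2+s(1+s)(p^TAp)^2\ \ge\ c\,(1-s)^2\sum_{i,j}A_{ij}^2p_ip_j.
\]
To get it, I would use weighted Cauchy--Schwarz with weights $p_j/(1-p_l)$, which is legitimate precisely because $A_{ll}=0$; this gives $\sum_lp_l(Ap)_l^2\le\sum_{l,j}A_{lj}^2p_lp_j\bigl(1-\tfrac{p_l+p_j}2\bigr)$. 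I would then combine this with the positive $Q^2$ term by completing a square in the variable $p^TAp$. The factor $(q-1)^2q^2=(1-s)^2(2-s)^2$ in $\mu_{N,q,\beta}$ strongly suggests that such a completion of squares, yielding a $(1-s)^2$ loss, is the intended route.

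Once a pointwise bound of the form $c(q)\,Z^{q-2}P$ is in hand, I would again pass to expectations. Here $q-2<0$, so I would use Jensen or Hölder on $Z^{q-2}\ge N^{q-2}\cdots$ together with $\E[e^{\beta(X_i+X_j)}]\ge e^{0}$, or better a lower bound of order $\tfrac1Ne^{-\beta^2/2}$ for $\E[Z^{q-2}e_ie_j]$. That is where the $1/N$ and $e^{-\beta^2/2}$ factors in \eqref{eq:mu-definition} should appear. If the sharp quadratic inequality resists, the fallback is to keep the exact expectation and apply Gaussian integration by parts once more to the cross term. The aim there is to rewrite the cross term as a variance-type quantity, which is nonnegative.
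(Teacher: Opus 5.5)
\paragraph{Verdict.} There is a genuine gap: the case $1<q<2$ is not proved, and the route you sketch for it cannot work. Your second-derivative identity and your treatment of $q\ge 2$ are correct and agree with the paper. Your $Q,W,P$ are $Z^2a$, $Z^3h$, $Z^2k$ in the paper's normalisation $a=p^TAp$, $h=\sum_l p_l(Ap)_l^2$, $k=\sum_{i,j}A_{ij}^2p_ip_j$.

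\paragraph{The pointwise bound for $1<q<2$.} With $r=-s$ the bracket is $s(1+s)a^2-4sh+2k$. When $a=0$ the $Q^2$ term vanishes, so no completion of squares in $p^TAp$ can help, and you would need $h\le k/2$. Your weighted Cauchy--Schwarz bound gives only $h\le(1-\frac1N)k$ for uniform $p$, and this loses a factor of about $2$. Take $A_{lj}=u_l+u_j$ for $l\ne j$, with $\sum_l u_l=0$, and $p$ uniform. Then $a=0$ and in fact $h=\frac{N-2}{2N}k$. Your bound, however, only leaves $2k-4s(1-\frac1N)k$, which is negative for $s$ close to $1$ and $N\ge3$. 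The missing idea is a variance identity with respect to the discrete measure $p$, not the Gaussian. Let $I,J$ be i.i.d.\ with $\Prob(I=i)=p_i$ and put $d_i=(Ap)_i$. Then
\[
k-2h+a^2=\E\bigl[(A_{IJ}-d_I-d_J+a)^2\bigr]\ge0,
\]
i.e.\ $2ZW\le Z^2P+Q^2$ in your notation. With $\rho=q-2\in(-1,0)$ and $a^2\le k$, this gives $\rho(\rho-1)a^2+4\rho h+2k\ge(\rho+1)(\rho a^2+2k)\ge(\rho+1)(\rho+2)k=q(q-1)k$. It is this factor $q(q-1)$, not a $(1-s)^2$ loss, that combines with the prefactor $q(q-1)$ to give $q^2(q-1)^2$ in $\mu_{N,q,\beta}$.

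\paragraph{The expectation step for $1<q<2$.} This is also not carried out. A bound via $Z^{q-2}\ge N^{q-2}\cdots$ does not lead anywhere directly, since $Z$ is unbounded. The paper uses Cauchy--Schwarz with $\theta=2-q\in(0,1)$, together with subadditivity of $x\mapsto x^\theta$:
\[
\E[Z^\theta]\,\E\bigl[e_ie_jZ^{-\theta}\bigr]\ge\bigl(\E[\sqrt{e_ie_j}]\bigr)^2\ge1,
\qquad
\E[Z^\theta]\le\sum_{i}\E[e_i^\theta]=Ne^{\beta^2\theta^2/2}\le Ne^{\beta^2/2}.
\]
This yields $\E[Z^{q-2}e_ie_j]\ge e^{-\beta^2/2}/N$, which gives exactly $\mu_{N,q,\beta}$.

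\paragraph{Minor point.} Correlation matrices such as $\Delta_N$ can be singular. So the interpolation formula needs a justification that does not rely on a density, for instance the regularisation $C_t+\varepsilon I_N$ used in the paper.
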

The computation of various derivatives is based on  the {following} proposition.   
For a symmetric matrix $A$, {define the operator $\mathcal{L}_A$ on $C^2$ functions by} 
\begin{equation}
	\label{eq:lA}
	\mathcal{L}_A(g) := \frac{1}{2} \sum_{i,j = 1}^N A_{ij} \partial_{i,j}^2 g  = \frac{1}{2} \mathrm{tr}\left(A \, \mathrm{Hess}(g)\right).
\end{equation}
A function $f: \R^N \to \R$ is said to be  of moderate growth if for each $a > 0$, 
\[
\lim_{\|x\| \to + \infty} f(x) e^{-a \|x\|^2} = 0.
\]
\begin{proposition}
\label{prop:derivative}
Let $A$ be a symmetric matrix and $\Gamma_0$ be a {positive semidefinite} matrix. For every $t \in [0,1]$, set $\Gamma_t = \Gamma_0 + t A$ and assume that $\Gamma_t$ is {positive semidefinite}. 
Let $Z_t$ be a centered Gaussian vector with covariance matrix $\Gamma_t$.
Assume that $g: \R^N \to \R$ is a {$C^{2k}$ function}, $k \ge 1$, and that $g$ and all its {derivatives up to order} $2k$ are of moderate growth.
Set $F(t) := \E [g(Z_t)]$, for $t \in [0,1]$. {Then} $F$ is {$C^{k}$} and for every $r \in \{1, \ldots, k\}$,
\begin{equation}
	\label{eq:derivation}
	F^{(r)}(t) = 
	\E \left[ \underbrace{\mathcal{L}_A \circ \ldots \circ \mathcal{L}_A }_{r \ \mathit{times}}(g) \, (Z_t)\right].
\end{equation}
\end{proposition}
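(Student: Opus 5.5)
The argument is the standard Gaussian interpolation (heat-equation) computation, iterated $k$ times. First I would reduce to a representation in which $t$ enters smoothly. A clean way is to write $Z_t$ through its characteristic function. Alternatively, and more robustly when $\Gamma_t$ is singular, I would use the Gaussian heat semigroup directly.

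Concretely, I would approximate. For $\varepsilon>0$ set $\Gamma_t^\varepsilon=\Gamma_t+\varepsilon I_N$, which is positive definite on $[0,1]$, with density
$$p^\varepsilon_t(x)=(2\pi)^{-N/2}\det(\Gamma^\varepsilon_t)^{-1/2}\exp\left(-\tfrac12\langle (\Gamma^\varepsilon_t)^{-1}x,x\rangle\right).$$
The classical identity $\partial_t p^\varepsilon_t=\frac12\sum_{i,j}A_{ij}\partial^2_{ij}p^\varepsilon_t$ holds because $\partial_\Gamma p=\frac12\,\mathrm{Hess}_x p$ for Gaussian densities. It can be checked via Fourier transform: $\widehat{p_t}(\xi)=e^{-\langle\Gamma_t\xi,\xi\rangle/2}$, so $\partial_t\widehat{p_t}=-\frac12\langle A\xi,\xi\rangle\widehat{p_t}$.

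I would then differentiate $F_\varepsilon(t)=\int g\,p^\varepsilon_t$ under the integral sign. Integrating by parts twice moves $\mathcal L_A$ onto $g$, so $F_\varepsilon'(t)=\E[\mathcal L_A g(Z^\varepsilon_t)]$. Boundary terms vanish and dominated convergence applies because $g$ and its derivatives are of moderate growth. Indeed, on $[0,1]$ the densities and their $t$-derivatives are bounded by $c\,(1+\|x\|^2)^m e^{-\delta\|x\|^2}$ for some $\delta>0$ uniform in $t$, since $\Gamma^\varepsilon_t\preceq (\max_t\|\Gamma_t\|+1)I$, and moderate growth beats any such Gaussian. Since $\mathcal L_A g$ is $C^{2k-2}$ with derivatives up to order $2k-2$ of moderate growth, induction gives $F_\varepsilon^{(r)}(t)=\E[\mathcal L_A^{r}g(Z^\varepsilon_t)]$ for $r\le k$.

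To remove $\varepsilon$, I would write $Z^\varepsilon_t=Z_t+\sqrt\varepsilon G$ with $G$ an independent standard Gaussian vector. Then $\mathcal L_A^r g(Z_t+\sqrt\varepsilon G)\to\mathcal L_A^r g(Z_t)$ pointwise, and it is dominated by $C_a e^{a\|Z_t\|^2+a\|G\|^2}$ for small $a$. This bound is integrable uniformly in $t\in[0,1]$ because $\Gamma_t$ has bounded norm on $[0,1]$. Hence $F^{(r)}_\varepsilon\to H_r(t):=\E[\mathcal L_A^r g(Z_t)]$ uniformly on $[0,1]$, and $F_\varepsilon\to F$. Each $H_r$ is continuous in $t$ by the same domination, using the coupling $Z_t=\Gamma_t^{1/2}G'$ and continuity of $t\mapsto\Gamma_t^{1/2}$. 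Uniform convergence of the derivatives then yields $F\in C^k$ with $F^{(r)}=H_r$.

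The main obstacle I expect is the uniform domination step. One must check that moderate growth, in the form $|h(x)|\le C_a e^{a\|x\|^2}$ for every $a>0$, is integrable against Gaussians whose covariance is only bounded above, uniformly in $t$ and $\varepsilon$. This works by choosing $a<1/(4\sup_t\|\Gamma^\varepsilon_t\|)$. The rest is bookkeeping of the induction on $r$.
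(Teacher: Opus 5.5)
Your proposal is correct and follows essentially the same route as the paper. Both arguments use the Fourier/heat-equation identity $\partial_t\rho_t=\mathcal L_A\rho_t$, then differentiate under the integral with two integrations by parts and induct in the positive-definite case, and finally remove the regularization $\Gamma_t+\varepsilon I_N$ by dominated convergence with a Gaussian bound uniform in $t$ and $\varepsilon$.

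The only difference is cosmetic: you couple via $Z_t+\sqrt{\varepsilon}G$, whereas the paper uses $(\Gamma_t+\varepsilon I_N)^{1/2}G$. One step needs a justification you did not write. Pointwise domination alone gives convergence for each $t$, not uniform convergence. To get the uniform-in-$t$ claim, apply dominated convergence to the $\sup_t$ under the common coupling $Z_t=\Gamma_t^{1/2}G'$, using uniform continuity on compacts. Alternatively, bypass uniformity and pass to the limit in $F_\varepsilon^{(r-1)}(t)-F_\varepsilon^{(r-1)}(0)=\int_0^tF_\varepsilon^{(r)}(u)\,du$ by bounded convergence, which is what the paper does.
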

\begin{proof} 
{First assume that $\Gamma_t$ is positive definite for every $t\in[0,1]$.} By definition, one has for any $t \in [0,1]$,
\[
	F(t)  = \E [g(Z_t)] = \int_{\R^N} g(x) \rho_t(x) dx
\]
where
\[
\rho_t(x) = \frac{1}{(2\pi)^{N/2}} \frac{1}{\sqrt{\mathrm{det}(\Gamma_t)}}
\exp\left( - \frac{1}{2} \langle \Gamma_t^{-1} x , x \rangle \right),
\quad x \in \R^N.
\] 
We use the integral representation of $\rho_t$ (using the Fourier transform)
\[
	\rho_t(x) = \frac{1}{(2 \pi)^N} \int_{\R^N} e^{i \langle x, u \rangle} e^{-   \langle \Gamma_t u, u \rangle / 2 } \, du.
\]
to deduce that for every $t \in [0,1]$, and $x \in \R^N$,
\[
\frac{d}{dt} \rho_t(x) = \mathcal{L}_A(\rho_t) (x).
\]

Since $g$ has moderate growth, we can {differentiate $F$ under the integral sign} and this gives
\[
F'(t) = \int_{\R^N} g(x) \,  \frac{\partial}{\partial t} \rho_t(x) \, dx
= \int_{\R^N} g(x) \,  \mathcal{L}_A (\rho_t) (x) dx = 
\frac{1}{2} \sum_{i, j = 1}^N A_{ij} \int_{\R^N} g(x) \, \partial_{i,j}^2 (\rho_t)(x) \, dx.
\]
Since $g$ and its {derivatives} have moderate growth, {integrating by parts twice} proves that
\[
F'(t) = \frac{1}{2} \sum_{i, j = 1}^N A_{ij} \int_{\R^N} (\partial_{i,j}^2 g)(x) \, \rho_t(x) \, dx = \E \left[ \mathcal{L}_A(g)(Z_t)\right]
\]
which is the {claimed formula} for $r = 1$. A simple induction argument proves the result 
for every $r = 1, \ldots, k$.

{For the general case, let $G\sim\mathcal N(0,I_N)$ and, for $0\le r\le k$ and $0\le\varepsilon\le1$, set $g_r=\mathcal L_A^r (g)$ and}
\[
{H_r^\varepsilon(t)=\E\left[g_r\left((\Gamma_t+\varepsilon I_N)^{1/2}G\right)\right].}
\]
{The positive-definite case gives $(H_{r-1}^\varepsilon)'=H_r^\varepsilon$ for $\varepsilon>0$ and $1\le r\le k$. The covariance matrices are uniformly bounded, and moderate growth bounds each integrand, uniformly in $t$ and $\varepsilon$, by a constant multiple of $e^{\|G\|^2/4}$. Continuity of matrix square roots gives uniform convergence of the arguments for each fixed $G$. Dominated convergence therefore yields $H_r^\varepsilon\to H_r^0$ uniformly on $[0,1]$, and each $H_r^0$ is continuous. Passing to the limit in}
\[
{H_{r-1}^\varepsilon(t)-H_{r-1}^\varepsilon(0)=\int_0^t H_r^\varepsilon(u)\,du}
\]
{gives $(H_{r-1}^0)'=H_r^0$. Since $F=H_0^0$, this proves \eqref{eq:derivation} and $F\in C^k([0,1])$, with derivatives at the endpoints understood one-sidedly.}
\end{proof}
\begin{proof}[Proof of Theorem \ref{thm:second-derivative}.]
Let $C_0, C_1 \in \mathcal{E}_N$ be correlation matrices, and for $t \in [0,1]$, set
$
A := C_1 - C_0,$
$C_t := C_0 +tA,$
$X_t\sim {\mathcal N}(0,C_t)$ 
and
$$
F(t) := \E [Z_\beta(X_t)^q]
= \E \left[ \left( \sum_{i=1}^N e^{\beta (X_t)_i} \right)^q \, \right].
$$
Fix $x\in\R^N$, and write
$$
z_i = \e^{\beta x_i},\quad
S = \sum_{i = 1}^N z_i,\quad
p_i = \frac{z_i}{S}, \quad
g(x) = S^q.
$$
For any $q \ge 1$, the {hypotheses} of Proposition \ref{prop:derivative} are satisfied and it yields
\begin{align} \label{eq:soft-max-covariance-derivatives}
	F'(t) = \E[\mathcal L_A (g)(X_t)],\quad
	F''(t) = \E[\mathcal L_A^2 (g)(X_t)].
\end{align}

Set
$$
a = \langle Ap, p \rangle,\quad
h = \sum_i p_i(Ap)_i^2,\quad
k = \sum_{i,j}A_{ij}^2p_ip_j,\quad
\rho = q-2.
$$
Since ${C_0, C_1} \in \mathcal{E}_N$ are correlation matrices,
{we have} $A_{ii} = 0$ and the evaluation of $\mathcal{L}_A (g)$ requires only the computation of $\partial^2_{ij} g$ for $i \ne j$. Direct differentiation proves that
$$
\partial_i g = (\partial_i S^q) = q S^{q-1} \partial_i S = \beta q z_i S^{q-1}
\quad
\mathrm{and}
\quad
\partial_{ij}^2 g = (\partial_{ij}^2 S^q )= \beta^2 q(q-1) z_i z_j S^{q-2}, \ i \ne j.
$$
Therefore
\begin{align} \label{eq:soft-max-first-variation}
	\mathcal L_A (g) = \frac{\beta^2q(q-1)}2 \, S^q \, a.
\end{align}
Differentiating once more requires the computation of $\partial^2_{ij} \mathcal{L}_A (g)$ for $i \ne j$. Since
$$
\partial_{ij}^2 \left(S^q a\right) = \left(\partial_{ij}^2 S^q \right) a +  (\partial_i S^q )(\partial_j a )+ (\partial_j S^q) (\partial_i a) + S^q (\partial_{ij}^2 a),
$$
{we also need to evaluate the partial derivatives} of $a$. Noting that $a = \langle Az, z \rangle / S^2$ and that $(\partial_i S) = (\partial_i z_i) = \beta z_i$, one proves that
\[
(\partial_i a) = \beta \left( 2 \frac{z_i (Az)_i}{S^2} - 2 a  \frac{z_i}{S} \right)
\quad
\mathrm{and}
\quad
\left( \partial_j \frac{a}{S}\right) = \beta \left(  2 \frac{z_j (Az)_j}{S^3} -  3 a \frac{z_j}{S^2} \right), 
\]
and that
\[
(\partial_{ij}^2 a) = \beta^2 \left( 2 {A_{ij}} \frac{z_i z_j}{S^2}  - 4 \frac{z_i z_j ((Az)_i + (Az)_j)}{S^3} + 6 a \frac{z_i z_j}{S^2} \right),  \ i \ne j.
\]
Therefore, by definition of $\mathcal{L}_A$, one has
\begin{align*}
2 \mathcal L_A ( S^q \, a) & =  \sum_{i,j} A_{ij} \partial_{ij}^2 (S^q \, a) 
 = \sum_{i,j} A_{ij} \left( \left(\partial_{ij}^2 S^q \right) a +  (\partial_i S^q )(\partial_j a )+ (\partial_j S^q) (\partial_i a) + S^q (\partial_{ij}^2 a) \right)
\\
& = a  \sum_{i,j} A_{ij} \left(\partial_{ij}^2 S^q \right)   + 2 \sum_{i,j} A_{ij} (\partial_i S^q )(\partial_j a )
+ S^q  \sum_{i,j} A_{ij} (\partial_{ij}^2 a)
\\
& = \beta^2 S^q \left( q(q-1) a^2  + 2(2q h - 2 q a^2) +  ( 2 k - 8 h + 6 a^2) \right)
\\
& =  \beta^2 S^q \left( a^2(q-2)(q-3) +  4h(q-2) + 2k \right)
\end{align*}
which, {together with} \eqref{eq:soft-max-first-variation}, {gives}
\begin{align} \label{eq:soft-max-covariance-convexity}
	\mathcal L_A^2 (g) = \frac{\beta^4q(q-1)}4S^q\left( \rho(\rho-1)a^2+4\rho h+2k\right).
\end{align}

We record three elementary inequalities:
\begin{align} \label{eq:soft-max-ahk}
	k-2h+a^2\ge0,\quad
	a^2\le k,\quad
	a^2\le h.
\end{align}
Since the $p_i$'s are positive numbers that sum up to 1, {we can define $I,J$ to be independent, identically distributed random indices} with $\Prob(I = i) = p_i$, and put $d_i = (Ap)_i$. Then
$$
a = \E [A_{IJ}] = \E[d_I],\quad
k = \E [A_{IJ}^2],\quad
h = \E [d_I^2].
$$
The last two inequalities in \eqref{eq:soft-max-ahk} {follow} from the {Cauchy--Schwarz} inequality, while
$$
k-2h+a^2 = \E \left[ \left(A_{IJ}-d_I-d_J+a\right)^2\right]\ge0.
$$

If $1<q<2$, then $-1<\rho<0$. From \eqref{eq:soft-max-ahk},
$$
h\le\frac{k+a^2}{2}.
$$
Hence
\begin{align*}
	\rho(\rho-1)a^2+4\rho h+2k
	&\ge(\rho+1)(\rho a^2+2k) \ge(\rho+1)(\rho+2)k = q(q-1)k\ge0
\end{align*}
and \eqref{eq:soft-max-covariance-convexity} implies that 
\begin{align*}
	\E \left[ \mathcal{L}_A^2 (g)(X_t) \right] \ge \frac{\beta^4q^2(q-1)^2}4 \sum_{i \ne j} A_{ij}^2 
	\E \left[(p_i p_j S^q)(X_t) \right]. 
\end{align*}
To simplify the notation, {we suppress the dependence on $t$ from the notation}.
Observe that $p_i p_j S^q = z_i {z_j} S^{q-2}$ and
set $\theta := 2-q\in(0,1)$. By the
{Cauchy--Schwarz} inequality,
$$
\E[S^\theta] \, \E \left[\frac{z_i z_j}{S^\theta}\right]
\ge
\left(\E \left[ z_i^{1/2} z_j^{1/2} \right]\right)^2 .
$$
Since $\theta \in (0,1]$, one has $S^\theta \le \sum_{i=1}^N e^{\beta \theta X_i}$. The moment generating function of a centered Gaussian random variable $\eta$ is known to satisfy
$\E \left[ e^{\lambda \eta} \right]  = e^{\lambda^2 \E[\eta^2]/2}$, for all $\lambda \in \R$. Therefore, 
$$
\E \left[ z_i^{1/2} z_j^{1/2} \right] = e^{\beta^2 \E[(X_i + X_j)^2] / 8} \ge 1, 
\qquad i \ne j
$$
and
$$
\E[S^\theta] \le \sum_{i = 1}^N \E [e^{\beta \theta X_i}] = N e^{\beta^2 \theta^2 / 2} 
\le N e^{\beta^2 / 2}.
$$
Hence, for $1 < q < 2$, 
\begin{align}
	\label{eq:case1}
\E \left[\frac{z_i z_j}{S^\theta}\right] \ge e^{- \beta^2 / 2} / N
\qquad
\mathrm{and}
\qquad
	\E \left[ \mathcal{L}_A^2 (g)(X_t) \right] \ge \frac{\beta^4 q^2 (q-1)^2}{4N} e^{-\beta^2 /2} \, \|A\|_F^2.
\end{align}

If $q \ge 2$, then $\rho(\rho-1)a^2+4\rho h \ge 0${.} Indeed, if $q \ge 3$ then $\rho = q-2 \ge 1$ and all terms are nonnegative. {If} $2 \le q < 3$, $\rho(\rho - 1) \le 0$ and $a^2 \le h$ (see \eqref{eq:soft-max-ahk}) hence $\rho(\rho-1)a^2+4\rho h \ge \rho(\rho + 3) h \ge 0${.}
We deduce from \eqref{eq:soft-max-covariance-convexity} that 
\begin{align}
		\E \left[ \mathcal{L}_A^2 (g)(X_t) \right] \ge \frac{\beta^4q(q-1)}2 \E[S^q k] 
		 = \frac{\beta^4q(q-1)}2 \sum_{i \ne j}^N A_{ij}^2 \E[(p_i p_j S^q)(X_t)].
\end{align}
Since $S \ge z_i + z_j \ge 2 \sqrt{z_i \, z_j}$ and $q -2 \ge 0$, one has
\[
p_i p_j S^q = z_i z_j S^{q-2} \ge 2^{q-2} (z_i z_j)^{q/2}.
\]
Taking {expectations} and using again the fact that the moment generating
function of a centered Gaussian random variable is {at least 1}, we conclude that
\begin{align}
	\label{eq:case2}
	\E \left[ \mathcal{L}_A^2 (g)(X_t)\right] \ge 2^{q-3} \beta^4 q(q-1) \|A\|_F^2.
\end{align}
Combining \eqref{eq:soft-max-covariance-derivatives}, \eqref{eq:case1} and \eqref{eq:case2} 
shows \eqref{eq:global-strong-convexity-second}.
\end{proof}
\begin{proof}[Proof of Theorem \ref{thm:covariance-convexity}.]
With the {notation} of the theorem, set 
$F(t) := \E [Z_\beta(X_t)^q]$. Then by Theorem \ref{thm:second-derivative}, it is twice differentiable on $[0,1]$ and {satisfies} $F''(t) \ge \mu_{N,q,\beta} \|A\|_F^2 \ge 0,$ for every $t \in [0,1]$. Therefore, $F$ is convex and by {the second-order Taylor formula}
\[
  (1-t) F(0) + t F(1)  \ge F(t) + \frac{t(1-t)}{2} \mu_{N,q,\beta} \|A\|_F^2
\]
which {completes} the proof.
\end{proof}
\begin{proof}[Proof of Theorem \ref{thm:simplex-stability-above-one}.]
With the {notation} of the theorem, set $X \sim \mathcal{N}(0, C)$, $S \sim \mathcal{N}(0, \Delta_N)$, $C_0 := \Delta_N$, $C_1 := C$ and $A := C - \Delta_N$. Set 
$F(t)  := \E [Z_\beta(X_t)^q]$, where $X_t \sim \mathcal{N}(0, C_0 + t A)${. By} Theorem \ref{thm:second-derivative}, it is twice differentiable on $[0,1]$ and {satisfies} $F''(t) \ge \mu_{N,q,\beta} \|A\|_F^2 \ge 0,$ for every $t \in [0,1]$. Moreover{,} by \eqref{eq:soft-max-covariance-derivatives} and \eqref{eq:soft-max-first-variation}, one has
$$ 
F'(0) = \frac{\beta^2 q(q-1)}{2} 
\E \left[ Z_{\beta}(S)^{q-2} \sum_{i \ne j} A_{ij} e^{\beta (S_i + S_j)}\right].
$$
Since the law of $S$ is invariant under coordinate permutations, for every $i \ne j$,
\[
\E \left[ Z_{\beta}(S)^{q-2} e^{\beta (S_i + S_j)}\right]
=
\E \left[ Z_{\beta}(S)^{q-2} e^{\beta (S_1 + S_2)}\right]
\]
hence $F'(0) = \eta_{N,q,\beta} \sum_{i \ne j} A_{ij}$. Since $\Delta_N {\one} = 0$, one has
\[
F'(0) = \eta_{N,q,\beta} \langle A {\one}, {\one} \rangle
= 
\eta_{N,q,\beta} \langle C {\one}, {\one} \rangle.
\]
By {the second-order Taylor formula}, one has
\[
F(1) - F(0) = F'(0) + \int_0^1 (1-t) F''(t) dt 
\ge 
\eta_{N,q,\beta} \langle C {\one}, {\one} \rangle
+ \frac{\mu_{N,q,\beta}}{2} \|C - \Delta_N \|_F^2.
\]
{This completes the proof.}
\end{proof}

\section{Stochastic ordering for log-concave {profiles}.}
\label{sec:stochordering}
The goal of this section is to prove Theorem \ref{thm:generalversionMNT} and Corollary \ref{cor:logconcave-product}.  We 
recall basic facts {about Legendre duality and inf-convolution}. 
Let $w : \R \to {(0,+\infty)}$ be a {log-concave} function  and set
\begin{equation}
\label{eq:defphipsi}
\varphi(u) := \log \left( \int_\R w(u-x) d\gamma(x) \right),
\qquad
\psi(s) := \inf_{t \in \R} \left\{ \frac{t^2}{2} + \varphi(s-t) \right\}.
\end{equation}
Since $w$ is log-concave, there {exist} $\alpha, \beta \in \R$ such that $w(z) \le e^{\alpha z + \beta}$ and by properties of the convolution, $\varphi$ is a concave $C^2$ function on $\R$. {For each fixed $t$, the function $s\mapsto t^2/2+\varphi(s-t)$ is concave. Therefore the inf-convolution $\psi$, being the pointwise infimum of these concave functions, is concave on its effective domain $\{s:\psi(s)>-\infty\}$.}
\begin{proposition}
\label{prop:wellknown}
Assume {that} $w : \R \to {(0,+\infty)}${. Then}
the functions $\varphi$ and $\psi$ satisfy the following properties.
\begin{enumerate}[(i)]
	\item For any $u \in \R$, $\varphi(u) = \sup_{s \in \R} \{ \psi(u-s) - {s^2 /2}\}$.
	\item For any $s \in \R$,
	\begin{align}
		\label{eq:defpsi}
		\psi(s)  =  \inf_{t \in \R} \log\left( \int_{\R} w(s-y) e^{ty} d\gamma(y)\right)
		=  \log\left( \int_{\R} w(s-y) e^{t^*(s) \, y} d\gamma(y)\right)
	\end{align}
\end{enumerate}
where $t^*(s)$ is a unique real number {satisfying}
\begin{equation}	
	\label{eq:propt*}
	\int_{\R} y \, w(s-y) \, e^{t^*(s) \, y} d\gamma(y) =  0
\end{equation}
\begin{equation}
	\label{eq:propt*2}
	t^*(s) = \varphi'(s - t^*(s)) = \psi'(s).
\end{equation}
\end{proposition}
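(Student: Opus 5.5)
Since $\varphi$ is concave and finite, and $t^2/2$ is convex, the natural route is to pass through Legendre transforms. Write $\varphi = -f$ with $f$ convex, and $\psi = -g$ with
\[
g(s) = \sup_t \{ f(s-t) - t^2/2 \}.
\]
I would use a direct computation instead, which also yields (ii).

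\textbf{Step 1: the inner log-integral.} For fixed $s$ and $t$, complete the square:
\[
\int w(s-t-x)\,d\gamma(x),
\]
with the substitution $y = x + t$, equals
\[
\int w(s-y)\, e^{ty - t^2/2}\, d\gamma(y).
\]
Hence
\[
\frac{t^2}{2} + \varphi(s-t) = \log \int w(s-y)\, e^{ty}\, d\gamma(y) =: L_s(t).
\]
This gives the first equality in \eqref{eq:defpsi} immediately.

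\textbf{Step 2: attainment and uniqueness of the minimizer.} $L_s$ is the log-Laplace transform of the finite positive measure $\mu_s(dy) = w(s-y)\,d\gamma(y)$. That measure has Gaussian tails, since $w \le e^{\alpha z + \beta}$, and it is not a point mass. Therefore $L_s$ is finite, $C^\infty$, and strictly convex, with
\[
L_s'(t) = \frac{\int y\, e^{ty}\, d\mu_s}{\int e^{ty}\, d\mu_s}.
\]
For coercivity, I would show $L_s(t) \ge t^2/2 + \varphi(s-t) \to +\infty$. To see this, note that $\varphi$ is concave, so it is bounded above by an affine function. But that only gives an upper bound; what is needed is a lower bound on $\varphi$ growing slower than $t^2/2$. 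For that, use $w > 0$ and log-concavity: on some interval $[a,b]$, $w \ge c > 0$. Then
\[
\varphi(u) \ge \log\big(c\,\gamma([u-b,\,u-a])\big) \ge -\frac{(|u| + K)^2}{2} - K'.
\]
This lower bound is borderline, so I would instead argue directly on $L_s$. We have
\[
L_s(t) \ge \log \int_{[s-b,\,s-a]} c\, e^{ty}\, d\gamma(y),
\]
and this tends to infinity as $t \to \pm\infty$ whenever $s-b > 0$ (respectively $s-a < 0$). In general, I would pick a subinterval of positive mass on each side of $0$. This is possible because $w > 0$ everywhere: log-concavity forces $w$ to be bounded below on compact sets, so such intervals exist on both sides.

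\textbf{Step 3: characterizing the minimizer.} By strict convexity and coercivity, there is a unique minimizer $t^*(s)$, characterized by $L_s'(t^*) = 0$, which is exactly \eqref{eq:propt*}. This gives the second equality in \eqref{eq:defpsi}.

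\textbf{Step 4: the derivative identities \eqref{eq:propt*2}.} Setting $\frac{d}{dt}\big[t^2/2 + \varphi(s-t)\big] = 0$ gives $t^* = \varphi'(s - t^*)$. For $\psi'(s) = t^*(s)$, I would use the envelope theorem. The map $(s,t) \mapsto L_s(t)$ is $C^1$, and $t^*(s)$ is continuous by uniqueness together with local uniform coercivity. Alternatively, one can use the standard inf-convolution identity: $\psi$ is the inf-convolution of a $C^1$ function with the strongly convex quadratic, so
\[
\psi'(s) = \partial_s\big[t^2/2 + \varphi(s-t)\big]\Big|_{t = t^*} = \varphi'(s - t^*) = t^*.
\]
To justify differentiability cleanly, I would bound the difference quotients of $\psi$ above and below by evaluating at the fixed minimizers $t^*(s)$ and $t^*(s+h)$, then use continuity of $\varphi'$ and of $t^*$.

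\textbf{Step 5: the duality (i).} I would verify (i) as follows. Substituting $s \to u - s$, the claim becomes
\[
\sup_s \inf_t \big\{ t^2/2 + \varphi(u-s-t) - s^2/2 \big\}.
\]
Taking $t = -s$ gives the upper bound $\varphi(u)$. For the reverse inequality, choose $s$ with $-s = $ the optimal $t$ for the point $u - s$. Equivalently, choose $s$ such that $\psi'(u-s) = -s$ holds with equality. This corresponds to $s = -\varphi'(u)$, since then $t^*(u-s) = -s$ solves the first-order condition $t = \varphi'(u - s - t) = \varphi'(u)$, which is consistent. Then
\[
\psi(u-s) - s^2/2 = s^2/2 + \varphi(u) - s^2/2 = \varphi(u).
\]

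\textbf{Main obstacle.} I expect the main obstacle to be coercivity and existence of $t^*$ in Step 2, which needs the positivity of $w$.
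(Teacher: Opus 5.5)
Your proposal is correct. For (ii) it follows the paper closely: you use the same completion of the square identifying $t^2/2+\varphi(s-t)$ with $\log\int w(s-y)e^{ty}\,d\gamma(y)$, the same strict convexity of this log-Laplace transform, and the same coercivity from $w>0$ plus log-concavity. The first-order condition then gives \eqref{eq:propt*} and $t^*=\varphi'(s-t^*)$.

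The genuine differences are in (i) and in the identity $\psi'=t^*$.

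\textbf{Part (i).} The paper sets $H(u)=u^2/2+\varphi(u)$, computes $H^*(s)=s^2/2-\psi(s)$ and invokes the bipolar theorem $H=H^{**}$. You argue directly instead:
\begin{itemize}
\item You get $\le$ by choosing $t=-s$ in the inner infimum.
\item You get $\ge$ by exhibiting the maximizer $s=-\varphi'(u)$. You check that $t=\varphi'(u)$ is a stationary point of the convex function $t\mapsto t^2/2+\varphi(u-s-t)$, hence its minimizer, which yields $\psi(u-s)-s^2/2=\varphi(u)$.
\end{itemize}
This is the equality case of Fenchel--Young at $r=u-s=u+\varphi'(u)=H'(u)$, done by hand. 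It is self-contained and shows that the supremum in (i) is attained. However, it relies on differentiability of $\varphi$ and on the first-order characterization from (ii). The paper's argument for (i) needs only convexity and continuity of the finite function $H$ and is logically independent of the minimizer analysis.

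\textbf{The identity $\psi'=t^*$.} The paper obtains $t^*\in C^1$ from the implicit function theorem. This tacitly uses that $L_s''(t^*)>0$, i.e.\ that the tilted measure is nondegenerate. It then differentiates $\psi(s)=t^*(s)^2/2+\varphi(s-t^*(s))$. Your envelope argument sandwiches the difference quotients of $\psi$ by evaluating at $t^*(s)$ and $t^*(s+h)$. It needs only continuity of $\varphi'$ and of $t^*$, and you correctly get the latter from uniqueness plus coercivity that is uniform for $s$ in compact sets. So you use slightly weaker regularity at the cost of a little more bookkeeping.

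One minor point: your initial inequality $L_s(t)\ge t^2/2+\varphi(s-t)$ in Step 2 is actually an equality by Step 1. The detour through lower bounds on $\varphi$ is unnecessary, and the direct bound on $L_s$ you settle on is the paper's argument.
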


\begin{proof}
Define a function $H$ by $H(u) = u^2/2 + \varphi(u)$ for any $u \in \R$, and observe that ({using the change of variables $z=u-x$})
\[
H(u) = \log \left(e^{u^2/2} \int_{\R} w(u-x) \,  d\gamma(x) \right)= \log \left( \int_{\R} w(z) \, e^{uz} d\gamma(z) \right)
\]
meaning that it is {the log-Laplace transform, up to normalization, of the measure with density proportional to}
$w(z) e^{-z^2/2}$. Hence $H$ is a convex function. Its Legendre transform satisfies
\begin{align*} 
	H^{*}(s)  = \sup_{u\in\R} \left\{us-\frac{u^2}{2}-\varphi(u)\right\} 
	 = \frac{s^2}{2} - \inf_{u\in\R} \left\{\frac{(u-s)^2}{2}+\varphi(u)\right\} 
	 = \frac{s^2}{2}-\psi(s).
\end{align*}
Since $H$ is convex on its support, the bipolar {theorem} asserts that
$H = H^{**}$. Hence
\begin{align*}
	\varphi(u) & = \sup_{r\in\R} \left\{ur-\frac{r^2}{2}+\psi(r)\right\} -\frac{u^2}{2} = \sup_{r\in\R} \left\{ \psi(r)-\frac{(u-r)^2}{2} \right\}
\end{align*}
which is exactly property $(i)$. 
For fixed $s\in\R$, set
$
\Theta_s(t) := \frac{t^2}{2}+\varphi(s-t).
$
Using
$\e^{ty}d\gamma(y) = \e^{t^2/2}d\gamma(y-t),$
we obtain 
\begin{align*} 
	\Theta_s(t) = \log\int_{\R}w(s-y)\e^{ty}d\gamma(y)
\end{align*}
The function $\Theta_s$ is also the log of the Laplace transform of a random variable and it is not difficult to see that  it is strictly convex in $t$. Since
$w >0$ on $\R$, we know by log-concavity of $w$ that on any {interval}
$[a,b] \subset \R$, we have
\[
\min_{z \in [a,b]} w(z) = \min\{w(a), w(b)\} > 0.
\] 
Hence the limit of $\Theta_s(t)$ equals $+ \infty$ as $|t|$ tends to infinity and 
$\Theta_s(t)$ {attains its minimum on $\R$ at a unique point} $t^*(s)$.  Equation \eqref{eq:defpsi} follows immediately and
$t^*(s)$ is 
characterized by the equation $\Theta_s'(t^*(s)) = 0$. Since 
$$
\Theta_s' (t) = 
\frac{ \int_{\R} y\, w(s-y)\e^{ty}d\gamma(y)}{\int_{\R}w(s-y)\e^{ty}d\gamma(y)},
$$ 
equation \eqref{eq:propt*} is satisfied.
By definition of $\Theta_s$, we have $t^*(s) = \varphi'(s- t^*(s))$ which proves the first part of equation \eqref{eq:propt*2}. By the implicit function theorem, we know that 
$t^*$ is a $C^1$ function.  The second part {follows by differentiating the relation} $\psi(s) = (t^*(s))^2/2 + \varphi(s-t^*(s))$.
\end{proof}

{We now recall} the centered reverse
{Brascamp--Lieb} inequality of {Nakamura--Tsuji}~\cite{NT26} ({which} was further extended in \cite{MNT26}). As explained by Mulgund \cite{Mulgund26}, Lemma  {\ref{lem:centered-product}} is a particular case of Theorem 1.10 in \cite{MNT26}. {It is also an application} of Theorem 2.4 in \cite{NT26}, with the same choice of {Brascamp--Lieb} datum. 
\begin{lemma}[Centered Gaussian product inequality \cite{NT26}] 
	\label{lem:centered-product}
	Let $R$ be a correlation matrix, let $X\sim {\mathcal N}(0,R)$, and let
	$f_i:\R\to[0,\infty)$ be  log-concave functions such that
	$$
	\int_\R x f_i(x) d\gamma(x) = 0,
	\quad 1\le i\le N.
	$$
	Then
	\begin{align}\label{eq:centered-product} 
		\E \left[ \prod_{i = 1}^N f_i(X_i) \right]
		\ge \prod_{i = 1}^N \int_\R f_i d\gamma.
	\end{align}
\end{lemma}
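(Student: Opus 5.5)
The natural route is to present the lemma as an instance of the inverse Brascamp--Lieb inequality of Nakamura--Tsuji for \emph{centered} log-concave inputs. That theorem says the best constant equals the infimum over centered Gaussian inputs. So the proof has two parts: translate the Gaussian expectation into a Brascamp--Lieb datum, then compute the Gaussian constant by hand.

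\textbf{Step 1: the datum.} Let $v_1,\dots,v_N$ be the rows of $T=R^{1/2}$. Since $R$ is a correlation matrix, each $v_i$ is a unit vector. With $G\sim\mathcal N(0,I_N)$ we have $X_i=\ip{v_i}{G}$, so
\[
\E\Big[\prod_{i=1}^N f_i(X_i)\Big]=(2\pi)^{-N/2}\int_{\R^N}e^{-\norm{x}^2/2}\prod_{i=1}^N f_i(\ip{v_i}{x})\,dx .
\]
This is a Brascamp--Lieb functional with:
\begin{itemize}
\item Gaussian kernel $e^{-Q}$, where $Q(x)=\norm{x}^2/2$;
\item rank-one maps $B_ix=\ip{v_i}{x}$;
\item exponents $c_i=1$.
\end{itemize}
The right-hand side $\prod_i\int f_i\,d\gamma$ is naturally written as $\prod_i\int_\R f_i(s)e^{-s^2/2}\,ds/\sqrt{2\pi}$.

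It is cleanest to absorb the one-dimensional Gaussian weights into the inputs. Set $g_i(s)=f_i(s)e^{-s^2/2}$, which is log-concave and has barycenter $0$ by the hypothesis $\int x f_i\,d\gamma=0$. The kernel then becomes
\[
Q(x)=\frac{\norm{x}^2}{2}-\sum_{i=1}^N\frac{\ip{v_i}{x}^2}{2}.
\]
This $Q$ need not be positive semidefinite. However, Theorem 2.4 of \cite{NT26} (and Theorem 1.10 of \cite{MNT26}) is stated for such general quadratic kernels, provided the inputs are centered log-concave. I would use the form that keeps the weights inside the $f_i$ if it avoids nondegeneracy checks.

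\textbf{Step 2: Gaussian extremizers.} The cited theorem reduces the inequality to inputs $f_i(s)=e^{-\lambda_i s^2/2}$. Log-concavity of $f_i$ forces $\lambda_i\ge0$, with degenerate limits handled by approximation. For such inputs,
\[
\E\Big[e^{-\frac12\sum_i\lambda_iX_i^2}\Big]=\det(I+\Lambda^{1/2}R\Lambda^{1/2})^{-1/2},
\qquad
\prod_i\int f_i\,d\gamma=\prod_i(1+\lambda_i)^{-1/2}.
\]
The matrix $I+\Lambda^{1/2}R\Lambda^{1/2}$ is positive definite with diagonal entries $1+\lambda_i$, since $R_{ii}=1$. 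Hadamard's inequality then gives $\det(I+\Lambda^{1/2}R\Lambda^{1/2})\le\prod_i(1+\lambda_i)$, which is exactly the required bound with constant $1$.

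\textbf{Main obstacle.} The delicate point is matching the hypotheses of \cite{NT26}. Specifically, I need to confirm two things:
\begin{itemize}
\item The Gaussian extremizer class is the centered Gaussians with $\lambda_i\ge0$. If the theorem is applied to the $g_i$ instead, the class is $\lambda_i>-1$, and then Hadamard alone does not suffice.
\item The theorem allows $R$ singular and indefinite $Q$.
\end{itemize}
If the theorem only covers the nondegenerate case, I would first prove the inequality for $R_\varepsilon=(1-\varepsilon)R+\varepsilon I$. This matrix is a correlation matrix and is positive definite. I would then let $\varepsilon\to0$, using monotone or dominated convergence; the log-concavity of the $f_i$ gives them at most exponential growth, which is enough for the domination.
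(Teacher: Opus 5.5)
Your route is the paper's route. The paper gives no argument of its own: following Mulgund, it obtains the lemma by invoking Theorem 2.4 of \cite{NT26} (or Theorem 1.10 of \cite{MNT26}) for the datum $B_ix=\ip{v_i}{x}$, $c_i=1$. Your Hadamard computation correctly checks that centered Gaussian inputs $f_i=e^{-\lambda_i s^2/2}$ with $\lambda_i\ge0$ give constant exactly $1$.

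The point you list as the main obstacle is decisive, though, and Steps 1 and 2 as written do not fit together. Take the absorbed formulation you call cleanest: arbitrary centered log-concave $g_i$ and kernel $\frac12\ip{(I-R)x}{x}$. There the Gaussian competitors are $g_i=e^{-a_is^2/2}$ with $a_i>0$, i.e.\ $\lambda_i>-1$. Over that class the best constant is $0$ as soon as $R\neq I$, not $1$. Indeed, suppose $R_{12}=\rho\neq0$ and $\lambda_j=0$ for $j\ge3$. Then
\[
\det(I+\Lambda R)=(1+\lambda_1)(1+\lambda_2)-\lambda_1\lambda_2\rho^2 .
\]
For $\lambda_1>0>\lambda_2>-1$ the expectation stays finite, since it is at most $(1+\lambda_2)^{-1/2}$. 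The ratio of the two sides is then $\bigl((1+\lambda_1)(1+\lambda_2)/\det(I+\Lambda R)\bigr)^{1/2}<1$, and it tends to $0$ as $\lambda_2\to-1$.

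For instance, take $f_1(s)=e^{-s^2/2}$, $f_2(s)=e^{\delta s^2/2}$ with $0<\delta<1$, and $f_j\equiv1$ for $j\ge3$. These violate the inequality, even though $f_2\,d\gamma$ is a centered log-concave measure. So the statement you cite must satisfy two conditions:
\begin{itemize}
\item its admissible inputs are the $f_i$ themselves, log-concave with respect to Lebesgue measure and integrated against Gaussian measures (equivalently, after absorption, $g_i$ with $g_i(s)e^{s^2/2}$ log-concave);
\item its Gaussian saturation takes place inside that same class.
\end{itemize}
Only then does Step 2 reduce to $\lambda_i\ge0$, where Hadamard finishes.

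With that settled, your passage from $R_\varepsilon=(1-\varepsilon)R+\varepsilon I$ to singular $R$ is fine. Couple $X^\varepsilon=\sqrt{1-\varepsilon}\,X+\sqrt{\varepsilon}\,G'$. Each $f_i$ is continuous off at most two points, which $X_i$ hits with probability zero, so the integrands converge almost surely. The bound $f_i(x)\le e^{a_ix+b_i}$ then gives the domination.
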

The next statement is a consequence of Lemma {\ref{lem:centered-product}} and the 
{facts recalled above about inf-convolution}.
We consider {log-concave functions $w_i:\R\to(0,+\infty)$, $1\le i\le N$}. To each $w_i$, we associate  the functions $\varphi_i, \psi_i$  defined as in \eqref{eq:defphipsi}.
To every $s=(s_1, \ldots, s_N) \in \R^N$, we associate {$t^*(s) = (t_1^*(s_1), \ldots, t_N^*(s_N))\in \R^N$} where ${t_i^*(s_i)}$ is the unique real number defined in Proposition \ref{prop:wellknown} with the functions $\varphi_i$ (and $\psi_i$). 
\begin{proposition}
\label{prop:step2}
Let $X \sim \mathcal{N}(0, R)$ be a centered Gaussian vector where $R \in \mathcal{E}_N$. Then
\[
\E \left[ \prod_{i=1}^{N} w_i\left(s_i - \left(R t^*(s)\right)_i - X_i\right) \right] 
\ge
\exp\left(- \frac{1}{2} \langle R t^*(s), t^*(s) \rangle + \sum_{i=1}^N \psi_i(s_i)\right).
\]
\end{proposition}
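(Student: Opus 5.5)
The plan is to reduce the statement to Lemma \ref{lem:centered-product} by an exponential tilt (Cameron--Martin shift) of the Gaussian vector $X$. Write $t^* = t^*(s)$ for short. For any $v\in\R^N$, the change of measure with density $\exp(\langle v, X\rangle - \frac12\langle Rv,v\rangle)$ turns $\mathcal N(0,R)$ into $\mathcal N(Rv,R)$. We choose $v=-t^*$, so that $X - Rt^*$ under the original law has the law of $X$ under the tilted law. Concretely, for any nonnegative $F$,
\[
\E\left[F(X - Rt^*)\right] = \E\left[F(X)\, e^{-\langle t^*, X\rangle - \frac12 \langle R t^*, t^*\rangle}\right].
\]
Applying this with $F(x)=\prod_i w_i(s_i - x_i)$ (using the symmetry $X\stackrel{d}{=}-X$ to handle the sign), we get
\[
\E\left[ \prod_{i} w_i\left(s_i - (Rt^*)_i - X_i\right)\right]
= e^{-\frac12\langle Rt^*,t^*\rangle}\,\E\left[\prod_i w_i(s_i+X_i)\,e^{t_i^* X_i}\right].
\]
To check the signs: replacing $X$ by $-X$, the left side becomes $\E[\prod_i w_i(s_i - (Rt^*)_i + X_i)]$, which equals $\E[\prod_i w_i(s_i + Y_i)]$ with $Y\sim\mathcal N(-Rt^*,R)$. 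The density of $Y$ relative to $X$ is $e^{-\langle t^*,x\rangle-\frac12\langle Rt^*,t^*\rangle}$. The resulting factor is $e^{-t_i^*X_i}$, and flipping $X\to -X$ once more turns the integrand into $w_i(s_i - X_i)e^{t_i^*X_i}$. So the identity to aim for is
\[
\text{LHS} = e^{-\frac12\langle Rt^*,t^*\rangle}\,\E\left[\prod_i f_i(X_i)\right],\qquad f_i(y) := w_i(s_i - y)\,e^{t_i^*(s_i)\,y}.
\]
I would verify this identity carefully, since that is where sign errors live.

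Next we check the hypotheses of Lemma \ref{lem:centered-product} for the $f_i$. Each $f_i$ is log-concave, being the product of a log-concave function and an exponential of a linear function, and it is nonnegative. Its centering is exactly \eqref{eq:propt*}: $\int y\,w_i(s_i-y)e^{t_i^* y}\,d\gamma(y)=0$. Since $X_i$ has unit variance, the lemma applies and gives $\E[\prod_i f_i(X_i)]\ge \prod_i\int f_i\,d\gamma$. By \eqref{eq:defpsi}, $\int f_i\,d\gamma = \exp(\psi_i(s_i))$. Combining this with the prefactor yields the claim.

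The main obstacle is integrability, which is needed to justify the Girsanov identity and to ensure finiteness. Since $w_i\le e^{\alpha_i z+\beta_i}$, all the integrands are bounded by exponentials of linear functions. These are integrable against Gaussian measures, so Fubini and the change of measure are legitimate. Everything else is bookkeeping.
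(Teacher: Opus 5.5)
Your proposal is correct and follows the paper's proof. You use the same tilted functions $f_i(y)=w_i(s_i-y)e^{t_i^*(s_i)y}$, centered by \eqref{eq:propt*}, apply Lemma \ref{lem:centered-product} to them, and combine this with the Gaussian shift identity and \eqref{eq:defpsi}. Your first displayed identity has a sign slip (the integrand should be $w_i(s_i+X_i)e^{-t_i^*X_i}$), but your subsequent sign check corrects it; applying $\E[e^{\langle t^*,X\rangle}F(X)]=e^{\frac12\langle Rt^*,t^*\rangle}\E[F(X+Rt^*)]$ directly with $F(x)=\prod_i w_i(s_i-x_i)$, as the paper does, avoids the two reflections entirely.
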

\begin{proof}
{For $y\in\R$, set} $f_i(y) = w_i(s_i - y) \, e^{{t_i^*(s_i)} \, y}$. Since $w_i$ is log-concave, $f_i$ is log-concave. From \eqref{eq:propt*}, $f_i$ has Gaussian barycenter at the origin{. Hence, by} Lemma {\ref{lem:centered-product}}
\begin{equation}
	\label{eq:keyineq}
	\E \left[ \prod_{i=1}^{N} f_i(X_i) \right] 
	\ge 
	\prod_{i=1}^{N} \int_{\R} f_i(y) d\gamma(y) = \exp\left( \sum_{i=1}^{N} \psi_i(s_i)\right),
\end{equation}
where the last equality follows from \eqref{eq:defpsi}. Moreover, by definition of the $f_i$'s,
\[
\prod_{i=1}^{N} f_i(X_i) = \exp \left( \langle t^*(s), X \rangle \right) \prod_{i=1}^{N} w_i(s_i - X_i).
\]
However, for every {nonnegative} function $F$, one has the Gaussian shift {identity}
\[
\E \left[ \exp \left( \langle t^*(s), X \rangle \right) F(X) \right]
=
\exp\left( \frac{1}{2} \langle R t^*(s), t^*(s) \rangle \right) \E \left[  F(X + Rt^*(s)) \right]{.}
\]
{Therefore,}
\[
\E \left[ \prod_{i=1}^{N} f_i(X_i)\right]  = \exp\left( \frac{1}{2} \langle R t^*(s), t^*(s) \rangle \right)
\E \left[ \prod_{i=1}^{N} w_i\left(s_i - \left(R t^*(s)\right)_i - X_i\right) \right],
\]
which, combined with \eqref{eq:keyineq},  {completes} the proof.
\end{proof}
\begin{proof}[Proof of Theorem \ref{thm:generalversionMNT}.]
The proof is based on a {suitable} optimization problem.
For any $y \in \R^N$, set
\begin{equation}
\label{eq:deffunc}
\mathcal{J}(y) := - \frac{\|y\|^2}{2} + \sum_{i=1}^{N} \psi_i \left( (R^{1/2} \, y)_i\right).
\end{equation}
Since {the functions $\psi_i$} are concave, $\mathcal{J}$ is strictly concave and {$\mathcal{J}(y)\to-\infty$ as $\|y\|\to\infty$}. Therefore the functional $\mathcal{J}$ attains its maximum at a unique point that we call $y^*$. It satisfies $\nabla \mathcal{J}(y^*) = 0$. Since $R$ is symmetric, the computation of the gradient yields the  equation
\[
y^* = R^{1/2} Y, \quad \mathrm{where} \quad Y_i = \psi'_i\left( (R^{1/2} y^*)_i \right).
\]
We define $s_i$ by $s_i = (R^{1/2} y^*)_i$. By Proposition \ref{prop:wellknown}, we get that the associated ${t_i^*(s_i)}$ satisfies $\psi_i'(s_i) = {t_i^*(s_i)}$ (see {\eqref{eq:propt*2}}). Therefore
\[
y^* = R^{1/2} t^*(s),
\qquad
\qquad
s_i = (R^{1/2} y^*)_i = \left( R t^*(s) \right)_i
\qquad
\mathrm{and}
\qquad
\langle R t^*(s), t^*(s) \rangle = \|y^*\|^2.
\]
By Proposition \ref{prop:step2}, we conclude that
\[
\E \left[ \prod_{i=1}^{N} w_i(-X_i) \right] \ge \exp\left( - \frac{\|y^*\|^2}{2} + \sum_{i=1}^N \psi_i \left( (R^{1/2} \, y^*)_i\right)\right) = {\exp\left(\sup_{y \in \R^N} \mathcal{J}(y)\right)}
\]
which is exactly the {claimed inequality}  \eqref{eq:variational} in Theorem \ref{thm:generalversionMNT} (after observing that the Gaussian vector $X$ is symmetric).
\end{proof}
Finally, our approach {shows} that the assumption $R - \frac{1}{N} J_N \succeq 0$ is used only at the very end.
This assumption  {could} therefore be relaxed{,} and {other} corollaries of Theorem \ref{thm:generalversionMNT} {could be derived}. 
For the purpose of this paper, {we restrict attention to} the consequences in the 
case $R - \frac{1}{N} J_N \succeq 0$ {as in} Corollary \ref{cor:logconcave-product}.
\begin{lemma}
	\label{lem:linearalgebra}
A correlation matrix $R$ is such that $R - \frac1N J_N \succeq 0$ if and only if there exists a vector $v \in \R^N$ such that
\[
R^{1/2} \, v = {\one}
\qquad
\mathit{and}
\qquad
\|v\| \le \sqrt N.
\]
\end{lemma}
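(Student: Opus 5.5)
The plan is to reduce both directions to the standard variational characterization of the Loewner order by the square root. Write $T=R^{1/2}$, which is symmetric positive semidefinite. The key identity is that, for any $v\in\R^N$,
\[
\langle \one, x\rangle^2=\langle Tv,x\rangle^2=\langle v,Tx\rangle^2\le \|v\|^2\,\|Tx\|^2=\|v\|^2\langle Rx,x\rangle
\]
whenever $Tv=\one$, by Cauchy--Schwarz.

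\emph{Sufficiency.} Suppose $v$ exists with $Tv=\one$ and $\|v\|\le\sqrt N$. The displayed identity gives $\langle J_Nx,x\rangle=\langle\one,x\rangle^2\le N\langle Rx,x\rangle$ for every $x\in\R^N$. This is exactly $R-\frac1NJ_N\succeq0$.

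\emph{Necessity.} Assume $R\succeq\frac1NJ_N$.

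First I show $\one\in\operatorname{Range}(R)=\operatorname{Range}(T)$. If $x\in\ker R$, the hypothesis gives $0=\langle Rx,x\rangle\ge\frac1N\langle\one,x\rangle^2$. Hence $\one\perp\ker R$, so $\one\in(\ker R)^\perp=\operatorname{Range}(R)$.

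Next I choose $v:=T^{+}\one$, the Moore--Penrose (minimum-norm) solution. Then $Tv=\one$ and $v\in\operatorname{Range}(T)$.

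It remains to bound $\|v\|$. Since $v\in\operatorname{Range}(T)$, its norm is attained by testing against range vectors:
\[
\|v\|=\sup\{\langle v,Tx\rangle:\ \|Tx\|\le1\}=\sup\{\langle \one,x\rangle:\ \langle Rx,x\rangle\le1\}.
\]
By the hypothesis, $\langle\one,x\rangle^2\le N\langle Rx,x\rangle\le N$ on that set, so $\|v\|\le\sqrt N$.

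An equivalent route is $\|v\|^2=\langle R^{+}\one,\one\rangle$, combined with the Schur-complement equivalence
\[
\begin{pmatrix}R&\one\\ \one^T&N\end{pmatrix}\succeq0 .
\]

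The only delicate point is that $R$ may be singular, for instance $R=\Delta_N$-type matrices, where $\ker R$ may be nontrivial. This is why the range argument and the use of the pseudo-inverse are needed rather than $R^{-1/2}$. If $R$ is invertible, one can simply take $v=R^{-1/2}\one$ and $\|v\|^2=\langle R^{-1}\one,\one\rangle\le N$ follows from $R\succeq\frac1NJ_N$ by the same supremum argument.
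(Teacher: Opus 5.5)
Your proof is correct and is essentially the paper's argument. The paper defines $L(Tx):=\langle x,\one\rangle$ on $\operatorname{Ran}T$ and takes its Riesz representer $v\in\operatorname{Ran}T$, which is exactly your $v=T^{+}\one$; your kernel argument plays the role of the well-definedness of $L$, your supremum computation is the bound $\|L\|\le\sqrt N$, and the converse is the same Cauchy--Schwarz step.

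One small slip in your closing aside: $\Delta_N$ itself violates the hypothesis, since $\one\in\ker\Delta_N$ (your own kernel argument detects this), so a better singular example satisfying it is $R=J_N$.
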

\begin{proof}
Let $T = R^{1/2}${. Then} the matrix inequality is equivalent to
$$
|\ip{x}{\one}|\le\sqrt N\norm{Tx},\quad x\in\R^N.
$$
Hence $L(Tx) := \ip{x}{\one}$ is a well-defined linear functional on
$\operatorname{Ran}T$ with $\norm{L}\le\sqrt N$. By the Riesz representation
theorem, there is $v\in\operatorname{Ran}T$ with $\norm{v}\le\sqrt N$ such that
$$
\ip{Tx}{v} = \ip{x}{\one},\quad x\in\R^N.
$$
Since $T$ is symmetric, $Tv = \one$. Conversely, if $Tv = \one$ and
$\norm{v}\le\sqrt N$, then
$$
|\ip{x}{\one}|^2 = |\ip{Tx}{v}|^2\le N\norm{Tx}^2 = N\ip{Rx}{x},
$$
which is equivalent to $R-\frac{1}{N} J_N\succeq0$.
\end{proof}

\begin{proof}[Proof of Corollary \ref{cor:logconcave-product}.]
Let $v$ be defined in Lemma \ref{lem:linearalgebra}{.} {Choose} $y = tv$ where $t \in \R${.}
{By} Theorem \ref{thm:generalversionMNT} {and} \eqref{eq:variational}, {we obtain}
\[
\E \left[  \prod_{i=1}^{N} w_i(X_i) \right] \ge {\exp\left(\sup_{t \in \R} \left( - \frac{N t^2}{2} + \sum_{i=1}^N \psi_i \left( t\right)\right)\right)}
\]
since for every $i$, $(R^{1/2} \, v)_i = 1$ and $\|v\|^2 \le N$. This is the {claimed inequality} in 
\eqref{eq:stocorder2}.

Assume first that {$w_1=\cdots=w_N=w$, where $w : \R \to (0,+\infty)$ is log-concave}.  Then {$\psi_1=\cdots=\psi_N=\psi$} (associated {with} $w$) and \eqref{eq:stocorder2} implies that 
\[
\E \left[ \prod_{i=1}^{N} w(X_i) \right] \ge {\exp\left(\sup_{t \in \R} \left( - \frac{N t^2}{2} +  N  \psi \left( t\right)\right)\right)}
= \exp(N \varphi(0))
\]
by assertion $(i)$ in Proposition \ref{prop:wellknown}. The definition of $\varphi$ and {Gaussian symmetry imply} that
\[
\varphi(0) = {\log}\E [w(G_1) ],
\]
which {completes} the proof of \eqref{eq:stocordermain1} when $w >0$ on $\R$. {Now let $w:\R\to[0,+\infty)$ be log-concave. If $w=0$ almost everywhere, both sides of \eqref{eq:stocordermain1} vanish because each marginal is a nondegenerate Gaussian. Otherwise, $w$ is positive on an interval of nonzero length. For $\varepsilon>0$, we set}
$w_\varepsilon = w \star \gamma_\varepsilon$ where $\gamma_\varepsilon (t) = \frac{1}{\varepsilon \sqrt{2\pi}} \exp \left( -\frac{t^2}{2 \varepsilon^2}  \right) $.
Since log-concave functions are stable under convolution, $w_\varepsilon$ is log-concave on $\R$ and $w_\varepsilon : \R \to {(0,+\infty)}$. 
{Choose $a,b\in\R$ such that $w(x)\le e^{ax+b}$. For $0<\varepsilon\le1$,}
\[
{w_\varepsilon(x)\le e^{ax+b+a^2\varepsilon^2/2}\le e^{ax+b+a^2/2}.}
\]
{These bounds, and their products, are integrable against the Gaussian laws in question. The approximate-identity property gives $w_\varepsilon(x)\to w(x)$ for Lebesgue-almost every $x$. Since all marginals have densities, the convergence holds almost surely at each $X_i$ and at $G_1$, even if $R$ is singular. Dominated convergence therefore gives, as $\varepsilon\to0$,}
\[
\E \left[ \prod_{i=1}^{N} w_\varepsilon (X_i) \right] \longrightarrow \E \left[ \prod_{i=1}^{N} w (X_i) \right]
\quad
\mathrm{and}
\quad
\E [w_\varepsilon (G_1)] \longrightarrow \E [w(G_1)]
\]
which proves assertion \eqref{eq:stocordermain1} in the general case $w : \R \to {[0,+\infty)}$.

For every $\lambda > 0$, 
inequality \eqref{eq:LaplaceIneq} is a direct consequence of \eqref{eq:stocordermain1} by taking  the Gumbel profile 
 given by $w (x) = \exp\left( -\lambda e^{\beta x} \right)$.
\end{proof}

\section{Concluding the proof of Theorem \ref{thm:simplex-phase}.}
\label{sec:final}
We conclude the paper by proving  Theorem \ref{thm:simplex-phase}. Recall that $X \sim \mathcal{N}(0, C)$ and $S \sim \mathcal{N}(0, \Delta_N)$ are two centered Gaussian {vectors. Here} $C$ is a correlation matrix{,} and $\Delta_N$ is defined by \eqref{eq:simplex-covariance}.
{If $\beta=0$, then $Z_0\equiv N$, so all asserted inequalities are equalities. Henceforth assume $\beta\ne0$.}
At $q=1$, every marginal has variance 1, so
\[
\E [ \, |Z_\beta(X) | \, ]= \sum_{i = 1}^N \E \left[ e^{\beta X_i} \right]
= N e^{\beta^2 /2}.
\]

There are two different mechanisms depending on whether $q>1$ or $q<1$. 
In the case $q> 1$, Theorem \ref{thm:simplex-stability-above-one} gives
\[
\E [Z_\beta(X)^q] \ge \E [Z_\beta(S)^q]
\]
with equality if and only if the covariance matrix of $X$ is equal to $\Delta_N$.  {This proves the assertion for $q>1$.}
In the case $q<1$, we decompose the proof into two steps. 
The {first} is a transformation that was already noticed in \cite{KLZ2017, GS26, Mulgund26}.
Let $\eta  \sim \mathcal{N}(0, \frac{1}{N-1})$ be a Gaussian random variable, independent of {both} $X$ and $S${,} and define the Gaussian vector $Y$  by 
\[
Y_i = \sqrt{\frac{N-1}{N}} (X_i + \eta), \quad \mathrm{and} \quad G_i = \sqrt{\frac{N-1}{N}} (S_i + \eta).
\]
Then $Y \sim \mathcal{N}(0, R)$ where $R$ is a correlation matrix such that 
\[
R - \frac{1}{N} J_N = \frac{N-1}{N} \, C \succeq 0,
\]
while $G \sim \mathcal{N}(0, I_N)$, {by} the definition of $\Delta_N$.
Therefore, the assumption of Corollary \ref{cor:logconcave-product} {is satisfied} and  \eqref{eq:LaplaceIneq} {holds}.

A simple algebraic computation shows that
for every $\beta \in \R$,
\[
Z_{\tilde{\beta} }(Y) = e^{\beta \eta} Z_\beta (X)
\quad
\mathrm{and}
\quad
Z_{\tilde{\beta} }(G) = e^{\beta \eta} Z_\beta (S)
\]
where {$\tilde{\beta}=\beta\sqrt{N/(N-1)}$}. {Since $\eta$ is independent of both $X$ and $S$}, taking {expectations} shows that 
for every $q \ne 0$,
\begin{equation}
	\label{eq:q-power}
\E [Z_{\tilde{\beta} }(Y)^q ] = \E [e^{q \beta \eta}] \, \E [Z_\beta (X)^q],
\quad
\quad
\E [Z_{\tilde{\beta} }(G)^q] = \E [e^{q \beta \eta}] \, \E [Z_\beta (S)^q]
\end{equation}
and
\begin{equation}
	\label{eq:q=0}
\E [ \log Z_{\tilde{\beta} }(Y) ] =  \E [ \log Z_\beta (X) ],
\quad
\quad
\E [ \log Z_{\tilde{\beta} }(G) ] =  \E [ \log Z_\beta (S) ].
\end{equation}
The idea guiding the second step in the case $q<0$ is the {well-known connection} between completely monotone functions ({see, e.g., Bernstein's theorem} \cite{Bernstein1928})
and Laplace
transform ordering. 
In the case $q < 0$, we use the fact that the power function $x \mapsto x^q$ is a completely monotone function on $(0, + \infty)$. More explicitly, 
\[
x^q = \frac{1}{\Gamma(-q)} \int_{0}^{+\infty} s^{-q -1} e^{-sx} ds
\]
{By Tonelli's theorem and} \eqref{eq:LaplaceIneq}, we get that
$\E [Z_{\tilde{\beta} } (Y)^q] \ge \E [Z_{\tilde{\beta} }(G)^q]$. By \eqref{eq:q-power}, one has 
$\E [Z_\beta (X)^q] \ge \E [Z_\beta (S)^q]$.
Since $q<0$, raising the inequality to the power $1/q$ 
proves the result.

If $q=0$, we use the following {well-known identity for the logarithm}
\[
\log(x) = \int_0^{+\infty} \frac{e^{-s} - e^{-sx}}{s} ds{.}
\]
{For any Gaussian vector $V\in\R^N$ and $b\in\R$,}
\[
{|\log Z_b(V)|\le\log N+|b|\max_{1\le i\le N}|V_i|,}
\]
{so $\E|\log Z_b(V)|<\infty$. Moreover, for $x>0$,}
\[
{\int_0^{+\infty}\frac{|e^{-s}-e^{-sx}|}{s}\,ds=|\log x|.}
\]
{Thus Fubini's theorem applies to the logarithmic representation. Together with \eqref{eq:LaplaceIneq}, this yields}
\[
\E [\log Z_{\tilde{\beta} } (Y)] \le \E [\log Z_{\tilde{\beta} } (G)].
\]
This proves the result, using \eqref{eq:q=0}.

If $0 < q < 1$, we use the identity
\[
x^q = \frac{q}{\Gamma(1-q)} \int_0^{+\infty} (1 - e^{-sx}) \frac{ds}{s^{q+1}}
\]
{and Tonelli's theorem together with} \eqref{eq:LaplaceIneq} to get that
$\E [Z_{\tilde{\beta} } (Y)^q] \le \E [Z_{\tilde{\beta} }(G)^q]$. We conclude using \eqref{eq:q-power}.

\bibliographystyle{abbrv} \bibliography{biblio}

\address
\end{document}